\documentclass[article,12pt]{amsart}

%%%%%%%%%%%%%%%%%%%%%%%%%%%%%%%%%%%%%%%%%%%%%%%%%%%%%%%%%%%%%%%%%%%%%%%%%%%%%%%%%%%%%%%%%%%%%%%%%%%%%%%%%%%%%%%%%%%%%%%%%%%%%%%%%%%%%%%%%%%%%%%%%%%%%%%%%%%%%%%%%%%%%%%%%%%%%%%%%%%%%%%%%%%%%%%%%%%%%%%%%%%%%%%%%%%%%%%%%%%%%%%%%%%%%%%%%%%%%%%%%%%%%%%%%%%%
\usepackage{amsfonts}
\usepackage{amsmath}
\usepackage{graphicx}
\usepackage{color}
\usepackage{epsfig}
 \usepackage{dsfont}
\usepackage{float}

\setcounter{MaxMatrixCols}{10}

\setlength{\oddsidemargin}{0.1cm}   
\setlength{\evensidemargin}{-0.1cm}  
\setlength{\topmargin}{-2.2cm}  
\setlength{\textwidth}{15.cm} 
\setlength{\textheight}{22.cm}

\newcommand{\veps}{\varepsilon}

\newcommand{\ind}{\mbox{1}\kern-.25em \mbox{I}}

\def\build#1_#2^#3{\mathrel{\mathop{\kern 0pt#1}\limits_{#2}^{#3}}}

\def\videbox{\mathbin{\vbox{\hrule\hbox{\vrule height1ex \kern.5em
\vrule height1ex}\hrule}}}

\numberwithin{equation}{section}
\theoremstyle{plain}
\newtheorem{thm}{Theorem}[section]
\newtheorem{rem}{Remark}[section]

\newtheorem{lem}{Lemma}[section]

\newtheorem{prop}{Proposition}[section]
\newtheorem{ex}{Example}[section]

\def\Zbar{\overline{Z}}

\def\Esp{\mathbb{E}}
\def\Var{\mathbb{V}}

\def\R{\mathbb{R}}

\def\norm#1{\left\| #1 \right\|}
\def\abs#1{\left| #1 \right|}
\def\acc#1{\left\{ #1 \right\}}
\def\pa#1{\left( #1 \right)}
\def\pab#1{\bigl( #1 \bigr)}

\def\cro#1{\left[ #1 \right]}

\def\ind{\mathbb{1}}

\def\norm#1{\left\| #1 \right\|}
\def\abs#1{\left| #1 \right|}
\def\acc#1{\left\{ #1 \right\}}
\def\pa#1{\left( #1 \right)}
\def\pab#1{\bigl( #1 \bigr)}

\def\cro#1{\left[ #1 \right]}

\def\build#1_#2^#3{\mathrel{
\mathop{\kern 0pt#1}\limits_{#2}^{#3}}}
\def\tend_#1^#2{\mathrel{
\mathop{\kern 0pt\longrightarrow}\limits_{#1}^{#2}}}
\def\ntinf{n \rightarrow \infty}

\def\tendloi{\tend_{\ntinf}^{\mathcal{L}}}

\def\bkR{\mathbb{R}}
\def\bkE{\mathbb{E}}
\def\indic{{\large\mathds{1}}}

\def\thetachap{\widehat\theta}

%%%%%%%%%%%%%%%%%%%
%%   GUILLEMETS
%%%%%%%%%%%%%%%%%%%
\ifx\undefined\ly 
\gdef\beginguillemets{\leavevmode\raise0.3ex% 
         \hbox{{$\scriptscriptstyle\langle\!\langle\,$}\nobreak\ignorespaces}} 
\else 
\gdef\beginguillemets{\leavevmode\hbox{\ly(\kern-0.20em(\kern+0.20em}\nobreak} 
\fi 
\ifx\undefined\ly 
\gdef\endguillemets{\ifdim\lastskip>\z@\unskip\penalty\@M\fi 
         \leavevmode\raise0.3ex% 
         \hbox{{$\scriptscriptstyle\,\rangle\!\rangle$}}} 
\else 
\gdef\endguillemets{\nobreak\leavevmode\hbox{\kern+0.20em\ly)\kern-0.20em)}} 
\fi

\email{Antoine.Godichon@u-bourgogne.fr}
\email{Bruno.Portier@insa-rouen.fr}

% 97K80 : Applied Statistics 

\begin{document}
\title[Estimating the parameters of a truncated spherical distribution]
{An averaged projected Robbins-Monro algorithm for estimating
the parameters of a truncated spherical distribution}
\author{Antoine Godichon-Baggioni}
\address{Universit\'e de Bourgogne, Institut de Math\'ematiques de Bourgogne, 9 Avenue Alain Savary, 21000 Dijon, France}
\author{Bruno Portier}
\address{Normandie Universit\'e, INSA de Rouen, Laboratoire de Math\'ematiques,  
BP 08 - Avenue de l'Universit\'e, 76131 Saint-Etienne-du-Rouvray, France}

\maketitle

\begin{abstract}

The objective of this work is to propose a new algorithm to fit a sphere on a noisy 3D point cloud distributed around a complete or a truncated sphere.
More precisely, we introduce a projected Robbins-Monro
algorithm and its averaged version for estimating the center and the radius of the sphere.
We give asymptotic results such as the almost sure convergence of these algorithms as well as the asymptotic normality of the averaged algorithm.
Furthermore, some non-asymptotic results will be given, such as the rates of convergence in quadratic mean.
Some numerical experiments show the efficiency
of the proposed algorithm on simulated data for small to moderate sample sizes.
\end{abstract}

\medskip

\noindent \textbf{Keywords.}  Projected Robbins-Monro algorithm, Sphere fitting, Averaging, Asymptotic properties.

%%%%%%%%%%%%%%%%%%%%%%%%%%%%%%%%%%%%%%%%%%%%%%%%%%%%%%%%%%%%%%%%%%%%%%%%%%%%%%%%%%%%%%%%%
\section{Introduction}\label{SectionI}
%%%%%%%%%%%%%%%%%%%%%%%%%%%%%%%%%%%%%%%%%%%%%%%%%%%%%%%%%%%%%%%%%%%%%%%%%%%%%%%%%%%%%%%%%
\vspace{2ex}

Primitive shape extraction from data is a recurrent problem in many research fields such as archeology \cite{Thom}, medicine \cite{Zhang07}, mobile robotics \cite{Martins08}, motion capture \cite{Shafiq01} and computer vision \cite{Rabbani06, Liu14}. 
This process is of primary importance since it provides a high level information on the data structure.  
\vspace{1ex}

% dimension / case of the sphere / estimation of the parameters 
First works focused on the case of 2D shapes (lines, circles), but recent technologies enable to work with three dimensional data. 
For instance, in computer vision, depth sensors provide 3D point clouds representing the scene in addition to usual color images. 
In this work, we are interested in the estimation of the center $\mu \in \mathbb{R}^3$ and the radius $r > 0$ of a sphere from a set of 3D noisy data.
In practical applications, only a discrete set of noisy measurements is available. Moreover, sample points are usually located only near a portion of the spherical surface. Two kinds of problem can be distinguished: shape detection and shape fitting.
\vspace{1ex}

% shape detection 
Shape detection consists in finding a given shape in the whole data without any prior knowledge on which observations belong to it. In that case, the data set may represent several objects of different nature and may therefore contain a high number of outliers.
Two main methods are used in practise to solve this problem. 
The Hough transform \cite{Abuzaina2013} performs a discretization of the parameter space. 
Each observation is associated to a set of parameters corresponding to all possible shapes that could explain the sample point. 
Then, a voting strategy is applied to select the parameter vectors of the detected shapes. 
The advantage of this method is that several instances of the shape can be detected. 
However, a large amount of memory is required to discretize the parameter space, especially in the case of three dimensional models. 
The RANSAC (RANdom SAmple Consensus) paradigm \cite{Ransac_Rischler1981,Ransac_Schnabel2007} is a probabilistic method based on random sampling. 
Observations are randomly selected among the whole data set and candidate models are generated. 
Then, shapes can be detected thanks to an individual scoring scheme. 
The success of the method depends on a given probability related to the number of sampling and the fraction of points belonging to the shape. 
\vspace{1ex}
 
% shape fitting 
The shape fitting problem assumes that all the data points belong to the shape. For example, spherical fitting techniques have been used in several domains such as industrial inspection \cite{Jiang98}, GPS localization \cite{Beck2012}, robotics \cite{von2005constructive} and 3D modelling \cite{tran2015extraction}. 
Geometric and algebric methods have been proposed \cite{landau1987estimation, Rusu03, AlSharadqah14} for parameters estimation. 
Moreover, let us note that fitting methods are generally applied for shape detection as a post-processing step in order to refine the parameters of the detected shapes \cite{tran2015extraction}. 
\vspace{1ex}

% context of our paper 
In a recent paper, Brazey and Portier \cite{Brazey2014} introduced a new spherical probability density function 
belonging to the family of elliptical distributions, and designed to model points spread near a spherical surface. 
This probability density function depends on three parameters, namely a center $\mu \in \mathbb{R}^3$, a radius $r > 0$ and a dispersion parameter $\sigma > 0$.  
In their paper, the model is formulated in a general form in $\mathbb{R}^d$. 
To estimate $\mu$ and $r$, a backfitting algorithm (see e.g. \cite{breiman1985estimating}) similar to the one used in \cite{landau1987estimation} is employed. 
A convergence result is given in the case of the complete sphere.
However, no result is established in the case of a truncated sphere while simulations showed the efficiency of the algorithm.
\vspace{1ex}
 
% goal of our paper 
The objective of this work is to propose a new algorithm to fit a sphere on a noisy 3D point cloud distributed around a complete or a truncated sphere.
We shall assume that the observations are independent realizations of a random vector $X$ defined as 
\begin{equation}\label{Model_intro}
X = \mu + r \, W \,U_\Omega , 
\end{equation}
where $W$ is a positive real random variable
such that $\mathbb{E}\cro{W} = 1$,
$U_\Omega$ is uniformly distributed on a measurable subset $\Omega$ of the unit sphere of $\mathbb{R}^3$,
$W$ and $U_\Omega$ are independent.
Parameters $\mu \in \mathbb{R}^3$ and $r>0$ are respectively the center and the radius of the sphere we are trying to adjust to the point cloud.
Random variable $W$ allows to model the fluctuations of points 
in the normal direction of the sphere.
When $\Omega$ coincides with the complete sphere, then the distribution of $X$ is spherical (see e.g. \cite{muirhead2009aspects}).
Indeed, if we set $Y = (X - \mu)/ r$, then the distribution of $Y$ is rotationally invariant. 
\vspace{1ex}

We are interested in estimating center $\mu$ and radius $r$.
As $\norm{U_\Omega} = 1$, we easily deduce from
(\ref{Model_intro}) that
\begin{eqnarray}
\mu & = & \mathbb{E}\left[X - r \, \dfrac{(X - \mu)}{\norm{X - \mu}}\right] 
\label{Equation_mu}\\
r  & = & \mathbb{E}\left[\norm{X - \mu}\right].
\label{Equation_r}
\end{eqnarray}
It is clear that from these two equations, we cannot deduce
explicit estimators of parameters $\mu$
and $r$ using the method of moments 
since each parameter depends on the other.
To overcome this problem, we can use a backfitting type algorithm
(as in \cite{Brazey2014}) % ajouter ref Landau 
or introduce a recursive stochastic algorithm, which seems well-suited for 
this problem since equations 
(\ref{Equation_mu}) and (\ref{Equation_r}) can also be derived from the local minimization of the following quadratic criteria
\begin{eqnarray}
G(\mu,r)  := \dfrac{1}{2}\,\bkE\cro{(\norm{X-\mu} - r)^2}.
\end{eqnarray}

Stochastic algorithms, and more precisely Robbins-Monro algorithms, 
are effective and fast methods (see e.g. \cite{DUF1997, Kushner2003, robbins1951}). They do not need too much computational efforts and can easily be updated, which make of them good candidates to deal with big data for example. 
However, usual sufficient conditions to prove the convergence of this kind of algorithm are sometimes not satisfied and it is necessary to 
modify the basic algorithm.
We can, for example, introduce 
a projected version of the Robbins-Monro algorithm 
which consists in keeping the usual estimators in a nice subspace with the help of a projection.
Such an algorithm has been recently considered in \cite{bercu2012robbins} 
and \cite{lacoste2012simpler}.
\vspace{1ex}

In this paper, due to the non global convexity of function $G$, 
we estimate parameters $\mu$ and $r$ using a projected 
Robbins-Monro algorithm.
We also propose an averaged algorithm
which consists in averaging the projected algorithm.
In general, this averaged algorithm 
allows to improve the rate of convergence of the basic estimators, or to reduce the variance, or not to have to make a good choice of the step sequence, which can be as exhaustive as to estimate the parameters.
It is widely used when having to deal with Robbins-Monro algorithms 
(see \cite{polyak1992acceleration} or \cite{pelletier1998almost} amoung others).
\vspace{1ex}

This paper is organized as follows.
In Section 2, we specify the framework and assumptions.
After a short explanation on the non-convergence of the Robbins-Monro algorithm, the projected algorithm and its averaged version are introduced in Section~3.
Section 4 is concerned with the convergence results.
Some simulation experiments are provided in Section 5, showing 
the efficiency of the algorithms.
Proofs of the different results are postponed in Appendix.
%%%%%%%%%%%%
\section{Framework and assumptions} \label{section_framework}
%%%%%%%%%%%%

We consider in this paper a more general framework than the one described in the introduction.
Let $X$ be a random vector of $\bkR^d$ with $d\geq 2$.
Let $F$ denotes the distribution of $X$.
We assume that $X$ can be decomposed under the form
\begin{equation}\label{Def_X}
X = \mu + r\,W\,U_\Omega .
\end{equation}
where $\mu\in\bkR^d$, $r>0$, 
$W$ is a positive real continuous random variable (and with a bounded density if $d=2$),
$U_\Omega$ is uniformly distributed on a measurable subset
$\Omega$ of the unit sphere of $\bkR^d$. Moreover, let us suppose that $W$ and $U_\Omega$ 
are independent. 
\vspace{1ex}

Model (\ref{Def_X}) allows to model a point cloud of $\bkR^d$ 
spread around a complete or truncated sphere of center $\mu\in\bkR^d$
and radius $r>0$. 
Random vector $U_\Omega$ defines the position of the points
on the sphere and random variable $W$ defines the fluctuations
in the normal direction of the sphere.
As mentioned in the introduction, when $\Omega$ is the complete unit sphere, then the distribution of $X$ is spherical. 

\vspace{1ex}

When $W$ satisfies the condition $\bkE\cro{W} = 1$, 
 the radius $r$ is identifiable and can be directly estimated.
Indeed, since $\norm{U_\Omega} = 1$, then
$\norm{X-\mu} = rW$ and $\bkE\cro{\norm{X-\mu}}~ = ~ r\,\bkE\cro{W} ~= ~r$.
However, this condition is sometimes not satisfied (as in \cite{Brazey2014})
and only $r^\star := r\,\bkE\cro{W}$ can be estimated.
Therefore, in what follows, we are interested in estimating
$\theta : = \pa{\mu^T , r^{\star}}^T$,
which will be denoted by $(\mu , r^\star)$ for the sake of simplicity. 
\vspace{1ex}

We suppose from now that the following assumptions are fulfilled:
\begin{itemize}
\item \textbf{Assumption [A1].} 
The random vector $X$ is not concentrated around $\mu$:
\[\displaystyle\mathbb{E}\cro{\norm{X-\mu}^{-2}}  < \infty .\] \\
\item \textbf{Assumption [A2].} The random vector $X$ admits a second moment:
\[\displaystyle\mathbb{E}\cro{ \norm{X-\mu}^{2}} < \infty .\]
\end{itemize}
\vspace{1ex}

These assumptions ensure that the values of $X$ are concentrated around the sphere and not around the center $\mu$, without 
in addition too much dispersion. 
This framework totally corresponds to the real situation 
that we want to model.
Moreover, using (\ref{Def_X}), Assumptions [A1] and [A2] 
reduce to assumptions on $W$. 
More precisely, [A1] reduces to $\bkE\cro{W^{-2}} < \infty$ 
 and [A2] to $\bkE\cro{W^{2}} < \infty$.
\vspace{1ex}

Let us now introduce two examples of distribution allowing to model points 
spread around a complete sphere and satisfying assumptions [A1] and  [A2].

%%%%%%%%
\begin{ex}\label{ExampleUn}
Let us consider a random vector $X$ taking values in $\bkR^d$ with a distribution 
absolutely continuous with respect to the Lebesgue measure,
with a probability density function $f_\delta$ defined for all $\delta >0$ by
\begin{equation} \label{Dens_Bounded}
f_{\delta}(x) \ =\ \dfrac{C_d}{\norm{x - \mu}^{d-1}}\, 
\indic_{\displaystyle\acc{\norm{x - \mu}/r \in [1 - \delta\,,\, 1 + \delta]}} , 
\end{equation}
where $C_d$ is the normalization constant.
Then, we can rewrite $X$ under the form (\ref{Def_X}) with
$U_\Omega = U$, $W \sim \mathcal{U}([1-\delta,1+\delta])$ 
and $\bkE\cro{W} = 1$ for any $\delta > 0$.
\end{ex}

%%%%%%%%
\begin{ex}\label{ExampleDeux}
Let us consider the probability density function introduced in \cite{Brazey2014}.
It is defined for any $x\in\bkR^d$ by
\begin{eqnarray}\label{Dens_BP}
f(x)  = K_d\,\exp\pa{- \dfrac{1}{2\sigma^2} \pab{\norm{x - \mu} - r}^2}, 
\end{eqnarray}
where $K_d$ is the normalization constant.
Then, a random vector $X$ with probability density function $f$ can be rewritten under the 
form (\ref{Def_X}), with 
$\bkE\cro{W} \not=1$, but 
$\bkE\cro{W}$ is closed to $1$ when the variance $\sigma$ is negligible compared to the radius $r$.
\end{ex}
\def\Omegabar{\overline\Omega}

To obtain points distributed around a truncated sphere, 
it is sufficient to modify the previous densities
by considering densities of the form
$f_{\Omegabar}(x) = 
C_{\Omegabar} f(x)  
\indic_{\displaystyle\acc{(x - \mu) \in \Omegabar}}$ 
where $\Omegabar$ is the set of points of $\bkR^d$ whose polar 
coordinates are given by $(\rho, \theta_1, \ldots, \theta_{d-1} \in \bkR^*_+\times\Theta )$
where $\Theta$ defines the convex part $\Omega$ 
of the surface of the unit sphere of $\bkR^d$ we want to consider.

%%%%%%%%%%%%
\section{The algorithms} \label{section_params_est}
%%%%%%%%%%%%
\def\Fn{\mathcal{F}_{n}}

We present in this section two algorithms for estimating 
the unknown parameter $\theta$ which can be seen as a local minimizer (under conditions) of a function. Indeed, let us consider the function $G:~ \bkR^{d}\times ~ \bkR ~\longrightarrow ~\bkR$ 
defined for all $y = (z,a) \in \mathbb{R}^{d}\times \mathbb{R}_+^\ast$ 
by
\begin{eqnarray}\label{Def_G}
G(y) & := &  \dfrac{1}{2}\, \bkE\cro{(\norm{X- z} - a)^2}\ = \
 \dfrac{1}{2}\, \Esp\cro{g\pa{X,y}}, 
\end{eqnarray}
where we denote by  $g$ the function defined
for any $x\in \mathbb{R}^{d}$ and 
$y=(z,a)\in\mathbb{R}^{d} \times \mathbb{R}^*_+$ 
by $g(x,y)~:=~ \pa{\norm{x - z} - a}^2$.
The function $G$ is Frechet-differentiable and we denote by $\Phi$ its gradient, which is defined for all $y~=~(z,a)~\in~\mathbb{R}^{d} \times\mathbb{R}_+^\ast$ by
\begin{eqnarray}\label{Def_Gradient_G}
\Phi (y):= \nabla G(y) = \mathbb{E}\left[ \nabla_{y}g(X,y)\right] = \begin{pmatrix} z- \mathbb{E}\left[ X \right] -a\,\mathbb{E}
\cro{\dfrac{z-X}{\| z-X \|}} \\
a- \mathbb{E}\cro{\| z-X \|}.
\end{pmatrix}
\end{eqnarray}
From  (\ref{Def_X}) and definition of $\theta = \left( \mu , r^{\star}\right)$, we easily verify that
$\nabla G(\theta)=0$.
Therefore, since $\theta$ is a local minimizer of function $G$ (under assumptions) or a zero of $\nabla G$, an idea could be to introduce a stochastic gradient algorithm
for estimating $\theta$.

%%%%%%%%%%%%%%%%%
\subsection{The Robbins-Monro algorithm.} 
%%%%%%%%%%%%%%%%%
\ \ 
\vspace{1ex}

Let $(X_n)_{n\geq 1}$ be a sequence of independent and identically 
distributed random vectors of $\bkR^d$ following the same law as $X$
and let $(\gamma_n)_{n\geq 1}$ be a decreasing sequence of positive real numbers satisfying the usual conditions
\begin{equation}\label{Cond_gamma}
\sum_{n \geq 1}\gamma_n = \infty\quad \mbox{and} \quad \sum_{n \geq 1}\gamma_n^2 < \infty . 
\end{equation} 
When the functional $G$ is convex or verifies nice properties, a usual way to estimate the unknown parameter 
$\theta$ is to use the following recursive algorithm
\begin{equation}\label{Alg_RobMonro}
\theta_{n+1} = \theta_{n} - \gamma_{n}\,\nabla_{y} g(X_{n+1}, \theta_{n}),
\end{equation} 
with $\theta_1$ chosen arbitrarily bounded. 
The term $\nabla_{y} g\left( X_{n+1},\theta_{n}\right)$ can be seen as an estimate of the gradient of $G$ 
at $\theta_{n}$, and the step sequence $(\gamma_{n})$ controls the convergence of the algorithm. 
\vspace{1ex}

The convergence of such an algorithm is often established using the 
Robbins-Siegmund's theorem (see e.g. \cite{DUF1997})
and a sufficient condition to get it, is to verify that for any 
$y \in \mathbb{R}^{d}\times \mathbb{R}_+^\ast$, 
$\left\langle \Phi (y) , y- \theta \right\rangle > 0$
where $\left\langle . , .\right\rangle$ denotes the usual inner product and $\| . \|$ the associated norm.
However, we can show that this condition is only satisfied  
for $y$ belonging to a subset of $\mathbb{R}^{d}\times \mathbb{R}_+^\ast$
to be specified.
Thus, if at time $(n+1)$, the update of $\theta_n$ 
(using (\ref{Alg_RobMonro}))
leaves this subset, then 
it does not necessarily converge.
Therefore, we have to introduce a projected Robbins-Monro algorithm.

%%%%%%%%%%%%%%%%%
\subsection{The Projected Robbins-Monro algorithm} 
%%%%%%%%%%%%%%%%%
\ \ 
\vspace{1ex}

\def\calK{\mathcal{K}}
\def\calB{\mathcal{B}}
\def\mutild{\widetilde\mu}
\def\rtild{\widetilde r}
\def\vepstild{\widetilde\varepsilon}
\def\Rplus{\mathbb{R}^*_+}
\def\Rtild{\widetilde R}

Let $\calK$ be a compact and convex subset of $\bkR^d \times\Rplus$  
containing $\theta=(\mu,r^\ast)$ and
let $\pi: \mathbb{R}^{d}\times \Rplus \longrightarrow \calK$ be a projection 
satisfying
\begin{equation}\label{Cond_Proj}
 \left\{\begin{array}{l}
 \forall y,y' \in \mathbb{R}^{d} \times \Rplus,\ \ 
 \norm{\pi (y) - \pi (y')} \leq \left\| y - y' \right\| \\
 \forall y \notin \calK, \ \pi (y) \in \partial \calK \\
 \end{array}\right.
\end{equation}
where $\partial\calK$ is the frontier of $\calK$. An example will be given later.
\vspace{1ex}

\noindent
Then, we estimate $\theta$ using the following 
Projected Robbins-Monro algorithm (PRM), defined recursively by
\begin{equation}
\label{defprm} \thetachap_{n+1} = \pi \left( \thetachap_{n} - \gamma_{n} \nabla_{y}g\left( X_{n+1},\thetachap_{n} \right) \right) ,
\end{equation}
where $\thetachap_{1}$ is arbitrarily chosen in $\calK$, and $\pa{\gamma_{n}}$ is a decreasing sequence of positive real numbers satisfying (\ref{Cond_gamma}).
\vspace{1ex}

Of course the choice of subset $\calK$ and projector $\pi$ 
is crucial.
It is clear that if $\calK$ is poorly chosen for a given projector, 
the convergence of the projected algorithm towards $\theta$
will be slower, even if from a theoretical point of view,
we shall see in the next section dedicated to the theoretical results, that this algorithm is almost the same as the traditional Robbins-Monro algorithm since
the updates of $\thetachap_n$, ie. the quantities
$\pab{\thetachap_n - \gamma_{n}\,\nabla_{y}\, g
\pab{X_{n+1},\thetachap_{n}}}$, leave $\calK$ only a finite number of times. 
\vspace{1ex}

Let us now discuss the choice of $\calK$ and $\pi$.
The choice of $\calK$ is directly related to the following assumption
that we introduce to ensure
the existence of a compact subset on which the 
scalar product $\left\langle \Phi (y) , y- \theta \right\rangle$ is  positive.

\begin{itemize}
\item \textbf{Assumption [A3].} There are two positive constants $R_\mu$
and $R_r$ such that
for all $y = (z,a) \in \overline{\mathcal{B}(\mu, R_\mu)}\times
\overline{\mathcal{B}(r^\ast, R_r)}$, 
\begin{equation}\label{Cond_A3}
\sup_{z \in \overline{\mathcal{B}(\mu, R_\mu)}} 
\lambda_{\max}\pab{\Gamma(z)} < 
\frac{1 - \norm{\bkE\left[ U_\Omega\right]}^2 / A}{r^\ast + \frac{3}{2} R_r },
\end{equation}
with $A$ such that $\norm{\bkE\left[ U_\Omega\right]}^2 < A < 1$,
and $\lambda_{\max}(M)$ denotes the largest eigenvalue of matrix $M$, and
$$\Gamma(z) :=  \bkE\left[\frac{1}{\norm{X - z}}\pa{I_{d} - \frac{(X-z)(X-z)^{T}}{\norm{X-z}^{2}}}\right] .$$
\end{itemize}

\begin{rem}{\rm
The less the sphere is troncated, the more 
$\left\| \bkE\left[ U_{\Omega} \right]\right\|$ is close to $0$ and
the constraints on $R_\mu$ and $R_r$ are relaxed.
In particular, when the sphere is complete, ie. 
$U_\Omega = U$ where $U$ denotes the random vector
uniformly distributed on the whole unit sphere of $\bkR^d$, 
then $\bkE\left[U_\Omega\right] = 0$ and
Assumption [A3] reduces to 
\[
\sup_{z \in \overline{\mathcal{B}(\mu, R_\mu)}} 
\lambda_{\max}\pab{\Gamma(z)} < 
\frac{1}{r^\ast + \frac{3}{2} R_r }.
\]
}\end{rem}
\vspace{1ex}

The main consequence of Assumption [A3] is the following proposition
which is one of the key point to establish the convergence of the PRM algorithm.

% Proposition
\begin{prop}\label{propfortconv}
Assume that [A1] to [A3] hold. 
Then, there is a positive constant $c$ such that for all 
$y \in \overline{\calB(\mu, R_\mu)}\times
\overline{\calB(r^\ast, R_r)}$,
\[
\left\langle \Phi (y) , y - \theta \right\rangle \geq c \left\| y - \theta \right\|^{2} .
\]
\end{prop}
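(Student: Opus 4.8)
The plan is to establish the lower bound on $\langle \Phi(y), y-\theta\rangle$ by writing $\Phi(y) = \Phi(y) - \Phi(\theta)$ (since $\Phi(\theta)=0$) and relating this difference to the Hessian of $G$ through a mean-value argument. Writing $y = (z,a)$ and $\theta = (\mu, r^\star)$, I would first compute the Hessian $\nabla^2 G$ explicitly from the gradient formula in (\ref{Def_Gradient_G}). The top-left $d\times d$ block involves exactly the matrix $\Gamma(z)$ appearing in Assumption [A3] (up to the factor $a$), the bottom-right scalar entry equals $1$, and the off-diagonal coupling block is $-\mathbb{E}\bigl[(z-X)/\norm{z-X}\bigr]$. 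The idea is that [A3] is precisely the condition guaranteeing that this Hessian is uniformly positive definite on the compact box $\overline{\calB(\mu,R_\mu)}\times\overline{\calB(r^\ast,R_r)}$.

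The key computation is to control the quadratic form $v^T \nabla^2 G(y)\, v$ from below for $v = (v_1, v_2) \in \bkR^d\times\bkR$. Expanding, this gives $a\, v_1^T \Gamma(z) v_1 + v_2^2 - 2 v_2\, v_1^T \mathbb{E}\bigl[(z-X)/\norm{z-X}\bigr]$. To bound the cross term I would use Cauchy--Schwarz together with the observation that $\norm{\mathbb{E}[(z-X)/\norm{z-X}]}$ can be related to $\norm{\mathbb{E}[U_\Omega]}$ via the decomposition (\ref{Def_X}); here the unit-norm property $\norm{U_\Omega}=1$ and the fact that $(z-X)/\norm{z-X}$ is a unit vector are what let the constant $A$ and $\norm{\mathbb{E}[U_\Omega]}^2/A$ enter. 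The upper bound on $\lambda_{\max}(\Gamma(z))$ from [A3], combined with the denominator $r^\star + \tfrac{3}{2}R_r$ which bounds the admissible values of $a$ on the box, is exactly calibrated so that the discriminant-type inequality forces the quadratic form to stay bounded below by $c\norm{v}^2$ for some $c>0$.

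Once uniform positive definiteness of the Hessian is secured, I would conclude by a Taylor/mean-value argument: for $y$ in the convex box,
\[
\langle \Phi(y), y-\theta\rangle = \langle \Phi(y) - \Phi(\theta), y-\theta\rangle = (y-\theta)^T \left(\int_0^1 \nabla^2 G\bigl(\theta + t(y-\theta)\bigr)\, dt\right)(y-\theta),
\]
and since the box is convex the whole segment $\theta + t(y-\theta)$ stays inside it, so each $\nabla^2 G$ along the path satisfies the lower bound with the same constant $c$. This yields $\langle \Phi(y), y-\theta\rangle \geq c\norm{y-\theta}^2$ directly. I would also need to verify that $\Gamma(z)$ is well-defined and the differentiation under the expectation is justified, which is where Assumptions [A1] and [A2] are used to guarantee integrability of the relevant quantities.

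The main obstacle I anticipate is the cross-term estimate: one must carefully track how $\norm{\mathbb{E}[(z-X)/\norm{z-X}]}$ is controlled uniformly over $z \in \overline{\calB(\mu,R_\mu)}$ rather than only at $z=\mu$, and show that the weighting by the parameter $A$ produces exactly the gap appearing in the numerator $1 - \norm{\mathbb{E}[U_\Omega]}^2/A$ of (\ref{Cond_A3}). Getting the algebra of the quadratic-form lower bound to close with a strictly positive $c$, rather than merely a nonnegative one, is the delicate point, and it is precisely here that the strict inequality in [A3] is essential.
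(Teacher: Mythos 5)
Your overall strategy --- writing $\langle\Phi(y),y-\theta\rangle=\int_0^1 (y-\theta)^T\Gamma_{\theta+t(y-\theta)}(y-\theta)\,dt$ and extracting positivity from a second-order analysis --- is close in spirit to the paper's argument, but as written it has a gap that [A3] does not let you close. First, a computational slip: the top-left block of the Hessian is $I_d - a\,\Gamma(z)$, not $a\,\Gamma(z)$, so the quadratic form is $\|v_1\|^2 - a\,v_1^T\Gamma(z)v_1 + v_2^2 + 2v_2\langle \mathbb{E}[(X-z)/\|X-z\|], v_1\rangle$; the $\|v_1\|^2$ term you dropped is exactly the one that must dominate, and with the sign you assigned to $a\,v_1^T\Gamma(z)v_1$ an \emph{upper} bound on $\lambda_{\max}(\Gamma(z))$, which is what [A3] provides, would point the wrong way. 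Second, and more seriously, the step ``secure uniform positive definiteness of the Hessian on the box, for all directions $v$'' is not a consequence of [A3]. The off-diagonal entry at a point $z\neq\mu$ is $\mathbb{E}[(X-z)/\|X-z\|]$, whose norm [A3] controls only at $z=\mu$ (where it equals $\|\mathbb{E}[U_\Omega]\|<\sqrt{A}$); for general $z$ one must write $f(z)=f(\mu)+\nabla f(c)(z-\mu)$ with $f(z):=\mathbb{E}[(z-X)/\|z-X\|]$ and $\nabla f=\Gamma$, and testing the Hessian against an arbitrary $v=(v_1,v_2)$ then produces a correction of order $\lambda_M R_\mu |v_2|\,\|v_1\|$, where $\lambda_M:=\sup_{z\in\overline{\mathcal{B}(\mu,R_\mu)}}\lambda_{\max}(\Gamma(z))$. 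This forces $R_\mu$ into the denominator of the required bound on $\lambda_M$, a condition strictly stronger than (\ref{Cond_A3}), whose denominator $r^\ast+\frac{3}{2}R_r$ involves only $R_r$. You flag this as ``the main obstacle'' but do not resolve it, and with the plan as stated it cannot be resolved from [A3] alone.

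The fix, which is what the paper's proof actually does, is to never ask for positivity in all directions: expand $F(z)=\mathbb{E}[\|X-z\|]$ and $f=\nabla F$ around $\mu$ directly inside $\langle\Phi(y),y-\theta\rangle$, using $f(\mu)=-\mathbb{E}[U_\Omega]$ and $\mathbb{E}[X]=\mu-r^\ast f(\mu)$. Then the only cross term requiring Young's inequality is $-2(a-r^\ast)\langle z-\mu,f(\mu)\rangle$, which involves $f$ at $\mu$ only and yields the margins $1-\|\mathbb{E}[U_\Omega]\|^2/A$ and $1-A$, while all remaining second-order terms are quadratic forms $(z-\mu)^T\nabla f(\cdot)(z-\mu)$ evaluated in the specific direction $z-\mu$ and multiplied by scalars whose absolute values sum to at most $a+\frac{1}{2}|a-r^\ast|\le r^\ast+\frac{3}{2}R_r$ --- exactly the constant appearing in [A3]. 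If you insist on the integral-of-Hessians formulation, you must likewise expand the off-diagonal entry around $\mu$ and evaluate the form only at $v=y-\theta$, so that the correction becomes $(a-r^\ast)(z-\mu)^T\nabla f(\cdot)(z-\mu)\le R_r\,\lambda_M\|z-\mu\|^2$ rather than an $R_\mu$-sized perturbation.
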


\begin{proof}
The proof is given in Appendix \ref{appendixA}.
\end{proof}

Assumption [A3] is therefore crucial but only technical.
It reflects the fact that the sphere is not too much
truncated and that the points are not too far away 
from the sphere which corresponds to the real situations
we want to model.

\noindent
In a general framework, this technical 
assumption is difficult to verify since it requires to specify the distribution of $X$.
In the case of distribution of Example \ref{ExampleUn} with $\delta < 1/10$, 
we can easily exhibit constant $R_\mu$  and $R_r$. 
Indeed taking $R_\mu = R_r =  r^\ast / 10$, 
then assumption [A3] holds.
When the distribution of $X$ is compactly supported
with a support included in $[1 - \delta, 1 + \delta]$,
it is fairly easy to find the constants provided that $\delta$ is small enough.
It is quite more difficult when dealing with distribution of Example 
\ref{ExampleDeux}. 
Nevertheless, topological results can ensure that these constants exist.
\vspace{1ex}

From constants $R_\mu$ and $R_r$ of Assumption [A3], it is then possible 
to simply define a projector $\pi$
which satisfies condition (\ref{Cond_Proj}).
Indeed, let us set $\calK = \calK_\mu \times\calK_r$ with
$\calK_\mu = \calB(\mu, R_\mu)$
and $\calK_r = \calB(r^*,R_r)$,
and define for any $y = (z,a)\in \bkR^d\times \bkR^*_+$ by $\pi (y):= \left( \pi_{\mu}(z) , \pi_{r}(a)\right)$, with
\begin{equation*}
\pi_{\mu}† (z) :=   \left\{\begin{array}{cl}
z & \text{if } z \in \calK_{\mu} \\
\mu + R_{\mu} \dfrac{(z - \mu)}{\norm{z-\mu}} & \text{otherwise }\\
\end{array}\right.
\end{equation*}
and
\begin{equation*}
\pi_{r} (a) :=   \left\{\begin{array}{cl}
a & \text{if } a \in \calK_{r_0} \\
r + R_{r} \dfrac{(a - r^{\star})}{\abs{a-r^{\star}}} & \text{otherwise }\\
\end{array}\right.
\end{equation*}
Such projector satisfies the requested conditions.
However, it is clear that this projector can not be implemented
since $\mu$ and $r^\ast$ are unknown.
We shall see in the simulation study how to overcome this problem.
\vspace{1ex}

We suppose from now that $\calK$ is a compact and convex 
subset of $\overline{\calB(\mu, R_\mu)} \times \overline{\calB(r^*,R_r)}$
such that $\theta \in \mathcal{K}$, but $\theta \notin \partial\mathcal{K}$, where $\partial \mathcal{K}$ is the frontier of $\mathcal{K}$, i.e there is a positive constant $d_{\min}$ such that $\overline{\mathcal{B}\left( \theta , d_{\min} \right)}\subset \mathcal{K}$.

%%%%%%%%%%%%%%%%%
\subsection{The averaged algorithm}
%%%%%%%%%%%%%%%%%
\ \ 
\vspace{1ex}

Averaging is a usual method to improve the rate of convergence of Robbins-Monro algorithms, or to reduce the variance, or finally not to have to make a good choice of the step sequence (see \cite{polyak1992acceleration}), but for the projected algorithms, 
this method is not widespread in the litterature.
In this paper, we improve the estimation of $\theta$ by adding
an averaging step to the PRM algorithm.  
Starting from the sequence $(\thetachap_n)_{n \geq 1}$ given by (\ref{defprm}), 
we introduce for any $n\geq 1$, 
\[
\overline{\theta}_{n} = \frac{1}{n}\sum_{k=1}^{n} \thetachap_{k} ,
\]
which can also be recursively defined by
\begin{equation}
\label{defmoy} \overline{\theta}_{n+1} = \overline{\theta}_{n} + 
\frac{1}{n+1}\pa{\thetachap_{n+1} - \overline{\theta}_{n}} ,\quad
\text{and} \quad \overline{\theta}_{1}=\widehat{\theta}_{1}.
\end{equation}
We shall see in the following two sections, the
gain provided by this algorithm. 

%%%%%%%%%%%%
\section{Convergence properties} \label{section_properties}
%%%%%%%%%%%%
\def\Var{\mathbb{V}\mbox{ar}}

We now give asymptotic properties of the algorithms. All the proofs are postponed in Appendix \ref{appendixB}. The following theorem gives the strong consistency of the PRM algorithm as well as properties on the number of times we really use the projection.
%%%%%%%%%%%%%%%
\begin{thm}\label{theops}
Let $\left( X_{n} \right)$ be a sequence of iid random vectors following the same law as $X$. 
Assume that [A1] to [A3] hold, then 
\[\lim_{n \rightarrow \infty} \| \thetachap_{n}  - \theta \| = 0 \quad a.s.\]
Moreover, the number of times the random vectors 
$\thetachap_n - \gamma_n\nabla_{y}g\left( X_{n+1},\widehat{\theta}_{n} \right)$ 
do not belong to $\calK$ is almost surely finite.
\end{thm}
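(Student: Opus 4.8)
The plan is to establish almost sure convergence via the Robbins--Siegmund theorem, exploiting the key lower bound on $\langle \Phi(y), y-\theta\rangle$ from Proposition~\ref{propfortconv}. First I would study the quantity $V_n := \norm{\thetachap_n - \theta}^2$ and try to control its conditional expectation increment $\bkE\crob{V_{n+1}\mid \Fcal_n}$, where $\Fcal_n := \sigma(X_1,\ldots,X_n)$. The crucial observation is that the projection $\pi$ is $1$-Lipschitz by the first line of \eqref{Cond_Proj} and that $\theta = \pi(\theta)$ since $\theta \in \calK$; hence
\[
\norm{\thetachap_{n+1} - \theta}^2 = \norm{\pi\pab{\thetachap_n - \gamma_n \nabla_y g(X_{n+1},\thetachap_n)} - \pi(\theta)}^2 \leq \norm{\thetachap_n - \gamma_n \nabla_y g(X_{n+1},\thetachap_n) - \theta}^2.
\]
Expanding the right-hand side and taking conditional expectation, the cross term produces $-2\gamma_n \langle \Phi(\thetachap_n), \thetachap_n - \theta\rangle$ (using $\bkE\crob{\nabla_y g(X_{n+1},\thetachap_n)\mid \Fcal_n} = \Phi(\thetachap_n)$), while the quadratic term contributes $\gamma_n^2\, \bkE\crob{\norm{\nabla_y g(X_{n+1},\thetachap_n)}^2 \mid \Fcal_n}$.

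Next I would bound the second-order term: since $\thetachap_n \in \calK$ which is compact, and using Assumptions [A1] and [A2], one shows that $\bkE\crob{\norm{\nabla_y g(X_{n+1},\thetachap_n)}^2 \mid \Fcal_n} \leq C\pab{1 + \norm{\thetachap_n - \theta}^2}$ for some constant $C$, the moment conditions on $\norm{X-\mu}$ guaranteeing integrability of the gradient. Combining this with Proposition~\ref{propfortconv}, which applies because $\thetachap_n \in \calK \subset \overline{\calB(\mu,R_\mu)}\times\overline{\calB(r^\ast,R_r)}$, yields
\[
\bkE\crob{V_{n+1}\mid \Fcal_n} \leq \pab{1 + C\gamma_n^2} V_n - 2c\,\gamma_n V_n + C\gamma_n^2.
\]
Since $\sum \gamma_n^2 < \infty$ and $\sum \gamma_n = \infty$ by \eqref{Cond_gamma}, the Robbins--Siegmund theorem then guarantees that $V_n$ converges almost surely to a finite random variable and that $\sum_n \gamma_n V_n < \infty$ a.s.; the latter, together with $\sum \gamma_n = \infty$, forces $\liminf V_n = 0$, and combined with the existence of the limit gives $V_n \to 0$, i.e. $\thetachap_n \to \theta$ a.s.

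For the second assertion, I would examine the event that $\thetachap_n - \gamma_n\nabla_y g(X_{n+1},\thetachap_n)$ leaves $\calK$. Because $\theta$ lies in the interior of $\calK$ with $\overline{\calB(\theta,d_{\min})}\subset\calK$, and since $\thetachap_n \to \theta$ a.s., for $n$ large enough $\thetachap_n$ is within $d_{\min}/2$ of $\theta$. Leaving $\calK$ then requires the increment $\gamma_n\norm{\nabla_y g(X_{n+1},\thetachap_n)}$ to exceed $d_{\min}/2$; since $\gamma_n \to 0$ and the gradient has bounded conditional second moment on the compact set $\calK$, a Borel--Cantelli argument on the summable probabilities $\Pb\pab{\gamma_n\norm{\nabla_y g(X_{n+1},\thetachap_n)} > d_{\min}/2 \mid \Fcal_n}$ shows that only finitely many such exits occur almost surely. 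The main obstacle I anticipate is the careful verification of the quadratic growth bound on the conditional second moment of the gradient near points where $\norm{X - z}$ can be small, which is precisely where Assumption [A1] is needed; handling the second statement also requires a clean conditional Borel--Cantelli (or martingale) argument rather than a naive one, since the exit events depend on $\thetachap_n$, which is itself random.
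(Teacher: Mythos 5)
Your argument for the almost sure convergence is essentially identical to the paper's: $1$-Lipschitz projection with $\pi(\theta)=\theta$, expansion of $\norm{\thetachap_{n+1}-\theta}^2$, the conditional bound $\bkE\crob{\norm{\nabla_y g(X_{n+1},\thetachap_n)}^2\mid\Fcal_n}\leq C(1+\norm{\thetachap_n-\theta}^2)$ (the paper gets $8\norm{\thetachap_n-\theta}^2+M$ using [A2]), Proposition~\ref{propfortconv} for the drift term, and Robbins--Siegmund. For the second assertion you take a genuinely different and heavier route. The paper's argument is a one-line contradiction: by the second property in \eqref{Cond_Proj}, whenever the update $\thetachap_n-\gamma_n\nabla_y g(X_{n+1},\thetachap_n)$ leaves $\calK$ the projected iterate $\thetachap_{n+1}$ lands on $\partial\calK$, hence $\norm{\thetachap_{n+1}-\theta}\geq d_{\min}>0$; if this happened infinitely often it would contradict the a.s.\ convergence just proved, so no Borel--Cantelli and no moment estimate are needed. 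Your route --- noting that eventually $\norm{\thetachap_n-\theta}<d_{\min}/2$, so an exit forces $\gamma_n\norm{\nabla_y g(X_{n+1},\thetachap_n)}>d_{\min}/2$, then applying conditional Chebyshev and the conditional (L\'evy) Borel--Cantelli lemma to the summable random series $\sum_n\Pb\pab{\cdot\mid\Fcal_n}=O(\sum_n\gamma_n^2)$ --- is also valid, since $V_n$ is bounded on the compact $\calK$ and you correctly flag that the exit events are $\Fcal_{n+1}$-measurable but not independent. What your approach buys is a quantitative flavour (it is close in spirit to the probability bound the paper proves later in Theorem~\ref{theol2l4} via Markov's inequality); what it costs is the extra machinery where a direct appeal to $\thetachap_{n+1}\in\partial\calK$ suffices.
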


The following theorem gives the rate of convergence in quadratic mean and 
the $L^{p}$~rates of convergence of the PRM algorithm (under conditions) as well as an upper bound of the probability that the random vector 
$\thetachap_n - \gamma_n\nabla_{y}g\left(  X_{n+1}, \thetachap_n \right)$ 
does not belong to $\calK$.
%
%%%%%%%%%%%%
\begin{thm}\label{theol2l4}
Let $\left( X_{n} \right)$ be a sequence of iid random vectors following the same law as $X$. 
Assume that [A1] to [A3] hold and consider
a step sequence $\left( \gamma_{n} \right)$ of the form 
$\gamma_{n} = c_{\gamma}n^{-\alpha}$, with $c_{\gamma}> 0 $ 
and $\alpha \in ]1/2,1[$. 
 Then, there is a positive constant $C_1$ such that for all $n \geq 1$,
\[
\bkE\cro{\left\| \thetachap_{n} - \theta \right\|^{2}}  \leq 
\frac{C_1}{n^{\alpha}}.
\]
Moreover, for all positive integer $p$ such that $\bkE\cro{\norm{X-\mu}^{2p}} < \infty$, 
there is a positive constant $C_{p}$ such that for all $n \geq 1$,
\[
\bkE\left[ \left\| \thetachap_{n} - \theta \right\|^{2p} \right] \leq \frac{C_{p}}{n^{p\alpha}},
\]
and for all $n\geq 1$,
\[
\mathbb{P}\left[ \thetachap_n - \gamma_n\nabla_{y}g\left( X_{n+1},\thetachap_n \right) \notin \calK  \right] \leq \frac{C_{p}}{d_{\min}^{2p}\,n^{p\alpha}},
\]
where $d_{\min} := \inf_{y \in \partial K}\acc{\left\| y - \theta \right\|}$ and $\partial K$ is the frontier of $K$.
\end{thm}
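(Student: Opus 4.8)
The plan is to exploit the nonexpansiveness of the projection together with the uniform convexity estimate of Proposition~\ref{propfortconv}. Write $\xi_{n+1} := \nabla_{y} g(X_{n+1},\thetachap_n)$, let $\mathcal{F}_n := \sigma(X_1,\dots,X_n)$, and set $V_n := \norm{\thetachap_n - \theta}^2$. Since $\theta \in \mathcal{K}$ we have $\pi(\theta)=\theta$, so the first line of (\ref{Cond_Proj}) gives
\[
V_{n+1} = \norm{\pi\pab{\thetachap_n - \gamma_n \xi_{n+1}} - \pi(\theta)}^2 \leq \norm{\thetachap_n - \gamma_n \xi_{n+1} - \theta}^2 = V_n - 2\gamma_n \langle \xi_{n+1}, \thetachap_n - \theta\rangle + \gamma_n^2 \norm{\xi_{n+1}}^2 .
\]
Conditioning on $\mathcal{F}_n$, the independence of $X_{n+1}$ and $\mathcal{F}_n$ gives $\bkE\cro{\xi_{n+1}\mid\mathcal{F}_n}=\Phi(\thetachap_n)$ by (\ref{Def_Gradient_G}); since $\thetachap_n \in \mathcal{K} \subset \overline{\mathcal{B}(\mu,R_\mu)}\times\overline{\mathcal{B}(r^\ast,R_r)}$, Proposition~\ref{propfortconv} yields the drift bound $\langle \Phi(\thetachap_n), \thetachap_n - \theta\rangle \geq c\, V_n$. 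The gradient $\nabla_{y} g(x,y)$ grows at most like $\norm{x-\mu}$ uniformly in $y\in\mathcal{K}$, so Assumption~[A2] gives a uniform bound $\bkE\cro{\norm{\xi_{n+1}}^2\mid\mathcal{F}_n}\leq c_2$. Taking expectations with $v_n:=\bkE\cro{V_n}$ then yields the recursion $v_{n+1}\leq(1-2c\gamma_n)v_n+c_2\gamma_n^2$. A standard stabilization lemma for such recursions (see e.g. \cite{DUF1997}), with $\gamma_n=c_\gamma n^{-\alpha}$ and $1/2<\alpha<1$, gives $v_n=O(\gamma_n)=O(n^{-\alpha})$, which is the first claim and the base case $p=1$ of the induction below.

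For the $L^{2p}$ bound I would argue by \emph{induction on $p$}. Raising the pre-projection inequality to the power $p$ and expanding,
\[
V_{n+1}^p \leq \pab{V_n + D_{n+1}}^p = \sum_{k=0}^p \binom{p}{k} V_n^{p-k} D_{n+1}^k, \qquad D_{n+1} := -2\gamma_n\langle \xi_{n+1},\thetachap_n-\theta\rangle + \gamma_n^2\norm{\xi_{n+1}}^2 .
\]
The term $k=0$ is $V_n^p$; after conditioning, the term $k=1$ produces the contraction $-2pc\,\gamma_n V_n^p$ plus a remainder of order $\gamma_n^2 V_n^{p-1}$. For $k\geq 2$, the factor $V_n^{p-k}$ is bounded (as $\mathcal{K}$ is compact) and, using $\abs{D_{n+1}}\leq 2\gamma_n\norm{\xi_{n+1}}\sqrt{V_n}+\gamma_n^2\norm{\xi_{n+1}}^2$ together with $\bkE\cro{\norm{\xi_{n+1}}^{2p}\mid\mathcal{F}_n}\leq c_{2,p}$ (which holds precisely because $\bkE\cro{\norm{X-\mu}^{2p}}<\infty$), each such term is of order $\gamma_n^k\,\bkE\cro{V_n^{p-\lceil k/2\rceil}}$. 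Invoking the induction hypothesis $\bkE\cro{V_n^q}=O(n^{-q\alpha})$ for $q<p$, every remainder is $O\pab{n^{-(p+1)\alpha}}$, so
\[
\bkE\cro{V_{n+1}^p}\leq(1-2pc\,\gamma_n)\,\bkE\cro{V_n^p}+O\pab{n^{-(p+1)\alpha}}.
\]
The same stabilization lemma gives $\bkE\cro{V_n^p}=O\pab{\gamma_n^{-1}\,n^{-(p+1)\alpha}}=O\pab{n^{-p\alpha}}$, the second claim.

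The probability bound follows from this $L^{2p}$ control. Since $\overline{\mathcal{B}(\theta,d_{\min})}\subset\mathcal{K}$, any point outside $\mathcal{K}$ lies at distance at least $d_{\min}$ from $\theta$, so Markov's inequality gives
\[
\mathbb{P}\cro{\thetachap_n-\gamma_n\xi_{n+1}\notin\mathcal{K}} \leq \mathbb{P}\cro{\norm{\thetachap_n-\gamma_n\xi_{n+1}-\theta}\geq d_{\min}} \leq \frac{\bkE\cro{\norm{\thetachap_n-\gamma_n\xi_{n+1}-\theta}^{2p}}}{d_{\min}^{2p}}.
\]
Bounding $\norm{\thetachap_n-\gamma_n\xi_{n+1}-\theta}^{2p}\leq 2^{2p-1}\pab{V_n^p+\gamma_n^{2p}\norm{\xi_{n+1}}^{2p}}$ and taking expectations, the first term is $O(n^{-p\alpha})$ by the second claim while the second is $O(\gamma_n^{2p})=O(n^{-2p\alpha})$, giving the announced bound $C_p/(d_{\min}^{2p}n^{p\alpha})$.

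The main obstacle is the $L^{2p}$ step: a naive use of the compactness bound on $V_n^{p-1}$ would only yield a forcing term of order $\gamma_n^2$, hence the suboptimal rate $n^{-\alpha}$. The crux is to feed the lower-order moment estimates supplied by the induction hypothesis into the higher-order terms of the binomial expansion, so that the forcing term is genuinely $O\pab{n^{-(p+1)\alpha}}$; this is exactly what upgrades the rate from $n^{-\alpha}$ to $n^{-p\alpha}$. The other technical ingredient, used at each stage, is the deterministic stabilization lemma controlling sequences satisfying $v_{n+1}\leq(1-a\gamma_n)v_n+b_n$.
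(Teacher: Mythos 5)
Your proof is correct and follows essentially the same route as the paper's: the $1$-Lipschitz projection combined with the drift bound of Proposition~\ref{propfortconv} and a stabilization lemma for the quadratic rate, a binomial expansion with induction on $p$ (feeding the lower-order moment bounds into the higher-order terms so the forcing term is $O(n^{-(p+1)\alpha})$) for the $L^{2p}$ rate, and a Markov/$d_{\min}$ argument for the exit probability. The only cosmetic differences are that you bound $\mathbb{E}\left[\left\|\xi_{n+1}\right\|^{2}\,\middle|\,\mathcal{F}_{n}\right]$ uniformly via compactness of $\mathcal{K}$ where the paper keeps a $\|\widehat{\theta}_{n}-\theta\|^{2}$-dependent bound, and that you apply the $d_{\min}$ argument to the pre-projection point where the paper applies it to $\widehat{\theta}_{n+1}\in\partial\mathcal{K}$.
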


We now focus on the asymptotic behavior of the averaged algorithm.
First of all, applying Theorem \ref{theops} and Toeplitz's lemma for example, we easily obtain
the strong consistency of the averaged estimator $\overline{\theta}_n$. 
Introducing the following assumption, we can specify 
its rate of convergence in quadratic mean as well as its asymptotic normality.
\vspace{1ex}

\noindent
\begin{itemize}
\item \textbf{Assumption [A4].}  The Hessian of $G$ at $\theta = (\mu, r^*)$,
denoted by $\Gamma_\theta$ and defined by
\[\Gamma_\theta := \begin{pmatrix}
I_d - r^*\bkE\cro{\dfrac{1}{\norm{X - \mu}} \pa{I_d - \dfrac{(X - \mu)\otimes(X - \mu)}{\norm{X - \mu}^2}}}
& \bkE\left[\dfrac{X - \mu}{\norm{X-\mu}}\right]\\
\bkE\left[\dfrac{X - \mu}{\norm{X-\mu}}\right]^T & 1 \\
\end{pmatrix}\]
 is a positive definite matrix.
\end{itemize}
\vspace{1ex}

Note that thanks to topological results, this assumption also implies Proposition \ref{propfortconv} but is not useful to obtain the constants $R_{\mu}$ and $R_{r}$. Nevertheless, this assumption is crucial to establish the results of the two 
following theorems but
it is satisfied as soon as the sphere is not too much truncated
and the dispersion around the sphere not too important
which corresponds to the real situations encountered.
Using model (\ref{Def_X}), $\Gamma_\theta$ rewrites under the form
\begin{equation}\label{eqhess}
\Gamma_{\theta}  = \begin{pmatrix}  
(I_d - \beta \left(I_d - \bkE\cro{U_\Omega\,U_\Omega^T}\right)  & \bkE\cro{U_\Omega} \\   
\bkE\cro{U_\Omega^T} &   1  \end{pmatrix}\quad\mbox{with}\quad 
\beta = \bkE\cro{W}\bkE\cro{W^{-1}}.
\end{equation}
When the sphere is complete, ie. $U_\Omega = U$, 
then $\bkE\cro{U_\Omega} = 0$, 
$\bkE\cro{U_\Omega\,U_\Omega^T} = (1/d) I_d$ and
$\lambda_{\min}(\Gamma_\theta) > 0$ as soon as $\beta < d/(d-1)$.
In the case of distribution of Example~\ref{ExampleUn},
we have $\beta = (\log(1+\delta) - \log(1-\delta))/(2\delta)$ and 
[A4] is satisfied as soon as $\delta$ is small enough.
In the case of distribution of Example \ref{ExampleDeux}, [A4] is satisfied
as soon as $r >\!\!> \sigma$.
When the sphere is not complete, we can easily show that a sufficient condition
to ensure [A4] is 
$\lambda_{\min}\pa{\mathbb{V}\mbox{ar}\cro{U_\Omega}} < 1 - 1/\beta$, where $\mathbb{V}\mbox{ar}\left[ U_{\Omega}\right]$ is the covariance matrix of the random variable $U_{\Omega}$.
In the case of the half sphere and $d=3$,  
we have $\lambda_{\min}\pa{\mathbb{V}\mbox{ar}\cro{U_\Omega}} = 1/12$
and $\Gamma_\theta$ is definite positive as soon as 
$\beta < 12 / 11$.
This condition holds for distribution of Example \ref{ExampleUn}
with $\delta < 0.4$ for instance, and distribution of Example \ref{ExampleDeux}
as soon as $r >\!\!> \sigma$.

%%%%%%%%%%%%%%%%
\begin{thm}\label{theol2moy}
Let $(X_n)$ be a sequence of iid random vectors following the same law as $X$. 
Assume that [A1] to [A4] hold and consider
a step sequence $\left( \gamma_{n} \right)$ of the form 
$\gamma_{n} = c_{\gamma}n^{-\alpha}$, with $c_{\gamma}> 0 $ 
and $\alpha \in ]1/2,1[$. 
Moreover, suppose that $\bkE[\norm{X-\mu}^{12}] < \infty$.
Then there is a positive constant $C$ such that for all $n \geq 1$,
\[
\bkE\left[ \left\| \overline{\theta}_{n}  - \theta \right\|^{2} \right] \leq \frac{C}{n}.
\]
\end{thm}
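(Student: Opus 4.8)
The plan is to follow the classical Polyak--Ruppert averaging analysis, adapted to the projected setting. First I would recall that, by Theorem \ref{theops} (and more quantitatively by Theorem \ref{theol2l4}), the projection is active only rarely, so I keep it explicit and write the recursion as
$\widehat{\theta}_{n+1}=\widehat{\theta}_n-\gamma_n\nabla_y g(X_{n+1},\widehat{\theta}_n)+\rho_{n+1}$,
where $\rho_{n+1}$ is the projection correction, vanishing unless $\widehat{\theta}_n-\gamma_n\nabla_y g(X_{n+1},\widehat{\theta}_n)\notin\mathcal{K}$ and satisfying $\norm{\rho_{n+1}}\leq 2\gamma_n\norm{\nabla_y g(X_{n+1},\widehat{\theta}_n)}$ by non-expansivity of $\pi$ and $\widehat{\theta}_n\in\mathcal{K}$. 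Setting $\Delta_n:=\widehat{\theta}_n-\theta$ and decomposing the gradient estimate into its conditional mean and the martingale increment $\xi_{n+1}:=\nabla_y g(X_{n+1},\widehat{\theta}_n)-\mathbb{E}[\nabla_y g(X_{n+1},\widehat{\theta}_n)\mid\mathcal{F}_n]$, I would linearize the drift about $\theta$: since $\Phi(\theta)=0$ and $\Phi$ is differentiable at $\theta$ with Jacobian the positive definite (hence invertible) matrix $\Gamma_\theta$ of assumption [A4], the Taylor remainder $\delta_n:=\Phi(\widehat{\theta}_n)-\Gamma_\theta\Delta_n$ satisfies $\norm{\delta_n}\leq C\norm{\Delta_n}^2$ on the compact $\mathcal{K}$, using the regularity of $\Phi$ ensured by assumption [A1].

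The heart of the argument is the identity obtained by solving the linearized recursion for the drift term, namely $\Gamma_\theta\Delta_n=\gamma_n^{-1}(\Delta_n-\Delta_{n+1})-\xi_{n+1}-\delta_n+\gamma_n^{-1}\rho_{n+1}$ (up to the harmless constant relating $\nabla_y g$ to $\nabla G$). Averaging over $n=1,\dots,N$ and inverting $\Gamma_\theta$ yields
\[
\overline{\theta}_N-\theta=\Gamma_\theta^{-1}\pa{T_N^{(1)}-T_N^{(2)}-T_N^{(3)}+T_N^{(4)}},
\]
where $T_N^{(1)}=\frac1N\sum_{n=1}^N\gamma_n^{-1}(\Delta_n-\Delta_{n+1})$ is the telescoping term, $T_N^{(2)}=\frac1N\sum_{n=1}^N\delta_n$ the nonlinear remainder, $T_N^{(3)}=\frac1N\sum_{n=1}^N\xi_{n+1}$ the martingale term, and $T_N^{(4)}=\frac1N\sum_{n=1}^N\gamma_n^{-1}\rho_{n+1}$ the projection term. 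It then suffices to prove $\mathbb{E}[\norm{T_N^{(3)}}^2]\leq C/N$ and that the three remaining terms are $o(1/N)$ in quadratic mean.

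For the martingale term, orthogonality of the increments gives $\mathbb{E}[\norm{T_N^{(3)}}^2]=N^{-2}\sum_{n=1}^N\mathbb{E}[\norm{\xi_{n+1}}^2]$; since $\norm{\nabla_y g(x,y)}\leq C(1+\norm{x})$ uniformly on $\mathcal{K}$, assumption [A2] yields $\mathbb{E}[\norm{\xi_{n+1}}^2]\leq C$ and hence $\mathbb{E}[\norm{T_N^{(3)}}^2]\leq C/N$: this is the term producing the announced rate. For the telescoping term I would apply Abel summation, leaving the boundary contributions $\gamma_1^{-1}\Delta_1$ and $\gamma_N^{-1}\Delta_{N+1}$ together with $\sum_{n\geq 2}(\gamma_n^{-1}-\gamma_{n-1}^{-1})\Delta_n$; using $\abs{\gamma_n^{-1}-\gamma_{n-1}^{-1}}\leq Cn^{\alpha-1}$, the rate $\mathbb{E}[\norm{\Delta_n}^2]\leq C_1 n^{-\alpha}$ of Theorem \ref{theol2l4}, and Minkowski's inequality, each surviving piece is of order $N^{\alpha/2-1}$ in $L^2$, hence $o(N^{-1/2})$ since $\alpha<1$, so $\mathbb{E}[\norm{T_N^{(1)}}^2]=o(1/N)$. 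The nonlinear remainder is controlled by $\norm{\delta_n}\leq C\norm{\Delta_n}^2$ together with the $L^4$ rate $\mathbb{E}[\norm{\Delta_n}^4]\leq C_2 n^{-2\alpha}$ of Theorem \ref{theol2l4}, which gives $\mathbb{E}[\norm{T_N^{(2)}}^2]\leq CN^{-2\alpha}=o(1/N)$ precisely because $\alpha>1/2$. Finally the projection term is dominated pointwise by $\gamma_n^{-1}\norm{\rho_{n+1}}\leq 2\norm{\nabla_y g(X_{n+1},\widehat{\theta}_n)}\indic_{\{\,\cdot\,\notin\mathcal{K}\}}$; combining a Cauchy--Schwarz splitting with the exit-probability estimate $\mathbb{P}[\widehat{\theta}_n-\gamma_n\nabla_y g(X_{n+1},\widehat{\theta}_n)\notin\mathcal{K}]\leq C_p d_{\min}^{-2p}n^{-p\alpha}$ of Theorem \ref{theol2l4}, and choosing $p$ large (allowed exactly because $\mathbb{E}[\norm{X-\mu}^{12}]<\infty$), makes $\mathbb{E}[\norm{T_N^{(4)}}^2]$ negligible. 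Collecting the four bounds gives $\mathbb{E}[\norm{\overline{\theta}_N-\theta}^2]\leq C/N$.

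The main obstacle is the joint treatment of the telescoping term $T_N^{(1)}$ and the nonlinear remainder $T_N^{(2)}$: averaging succeeds because the slowly decaying fluctuations of $\widehat{\theta}_n$ cancel on average, and quantifying this cancellation requires the Abel rearrangement fed by the sharp $L^2$ and $L^4$ rates of Theorem \ref{theol2l4}. It is here that the two-sided constraint $1/2<\alpha<1$ enters, the lower bound taming the nonlinear remainder and the upper bound taming the telescoped boundary terms, so that both fall strictly below the $1/N$ martingale floor. A secondary, more technical difficulty is the projection term $T_N^{(4)}$, where a naive estimate would be fatal; it is defused by the fast polynomial decay of the exit probability, which is the reason for imposing the high-order moment assumption on $\norm{X-\mu}$.
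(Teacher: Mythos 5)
Your proposal is correct and follows essentially the same route as the paper: the same decomposition of the averaged error via the linearized recursion $\Gamma_{\theta}(\widehat{\theta}_{n}-\theta)=\gamma_{n}^{-1}(\Delta_{n}-\Delta_{n+1})+\xi_{n+1}-\delta_{n}+\gamma_{n}^{-1}r_{n}$, Abel summation for the telescoping part, the $L^{2}$ and $L^{4}$ rates of Theorem \ref{theol2l4} for the boundary and nonlinear remainders, the exit-probability bound (with $p=6$, whence the twelfth moment) for the projection term, and orthogonality of the martingale increments for the dominant $C/n$ contribution. The roles you assign to the two constraints $\alpha>1/2$ and $\alpha<1$ match the paper's argument exactly.
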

 
With respect to results of Theorem \ref{theol2l4},
we clearly improve the rate of convergence in quadratic mean.
Note that the computed rate is the optimal one for such stochastic algorithms.
We finally give a central limit theorem
which can be useful to build confidence balls for the different parameters
of the sphere.
%
%%%%%%%%%%%%%%%
\begin{thm}\label{theotlc}
%%%%%%%%%%%%
Let $(X_n)$ be a sequence of iid random vectors following the same law as $X$ 
and let us choose the step sequence $(\gamma_n)$ of the form
$\gamma_{n} = c_{\gamma}n^{-\alpha}$, with $c_{\gamma}> 0 $ 
and $\alpha \in ]1/2,1[$. 
 Assume that [A1] to [A4] hold and suppose that $\bkE[\norm{X-\mu}^{12}] < \infty$.
Then $\left( \overline{\theta}_{n} \right)$ satisfies
\begin{equation}\label{TLC}
\sqrt{n}\pa{\overline{\theta}_{n} - \theta} \tendloi
\mathcal{N}\pa{0, \Gamma_{\theta}^{-1}\Sigma\,\Gamma_{\theta}^{-1}}
\end{equation}
with 
\begin{equation}
\Sigma := \bkE\cro{
\begin{pmatrix}
\mu - X - r^* \dfrac{(\mu - X)}{\norm{\mu - X}} \\
r^\ast - \norm{\mu - X}\\
\end{pmatrix}
\begin{pmatrix}
\mu - X - r^* \dfrac{(\mu - X)}{\norm{\mu - X}} \\
r^\ast - \norm{\mu - X}\\
\end{pmatrix}^{T}}. 
\end{equation}
\end{thm}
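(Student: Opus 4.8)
The plan is to establish the central limit theorem for the averaged estimator $\overline{\theta}_n$ by first obtaining a stochastic decomposition of the projected Robbins-Monro recursion and then averaging it, following the classical Polyak-Ruppert methodology (see \cite{polyak1992acceleration} or \cite{pelletier1998almost}). First I would use Theorem \ref{theol2l4} together with the last inequality therein: since $\bkE[\norm{X-\mu}^{12}]<\infty$, the probability that an update leaves $\calK$ is $O(n^{-p\alpha})$ for $p$ large enough, so almost surely the projection is active only finitely often. This means that for $n$ large, $\thetachap_{n+1} = \thetachap_n - \gamma_n \nabla_y g(X_{n+1},\thetachap_n)$ without projection, and the projection terms contribute a remainder that is summable and negligible at the $\sqrt n$ scale. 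I would write $\nabla_y g(X_{n+1},\thetachap_n) = \Phi(\thetachap_n) + \xi_{n+1}$, where $\xi_{n+1} := \nabla_y g(X_{n+1},\thetachap_n) - \Phi(\thetachap_n)$ is a martingale-difference sequence adapted to $\Fcal_n = \sigma(X_1,\ldots,X_n)$.

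The second step is the Taylor expansion of the gradient around $\theta$. Writing $\Phi(\thetachap_n) = \Gamma_\theta(\thetachap_n - \theta) + \delta_n$ with $\Gamma_\theta$ the Hessian from [A4] and $\delta_n$ a second-order remainder satisfying $\norm{\delta_n} \leq C\norm{\thetachap_n - \theta}^2$, I substitute into the recursion to obtain
\[
\thetachap_{n+1} - \theta = (\thetachap_n - \theta) - \gamma_n \Gamma_\theta(\thetachap_n - \theta) - \gamma_n \xi_{n+1} - \gamma_n \delta_n .
\]
Rearranging gives the key linearized identity
\[
\Gamma_\theta(\thetachap_n - \theta) = \frac{1}{\gamma_n}\pa{(\thetachap_n - \theta) - (\thetachap_{n+1}-\theta)} - \xi_{n+1} - \delta_n .
\]
Summing this from $k=1$ to $n$, dividing by $n$, and using the definition of $\overline{\theta}_n$ yields
\[
\Gamma_\theta(\overline{\theta}_n - \theta) = -\frac{1}{n}\sum_{k=1}^n \xi_{k+1} + \frac{1}{n}\sum_{k=1}^n \frac{1}{\gamma_k}\pa{(\thetachap_k-\theta)-(\thetachap_{k+1}-\theta)} - \frac{1}{n}\sum_{k=1}^n \delta_k .
\]
The first term on the right is the dominant one; the remaining two are to be shown negligible after multiplication by $\sqrt n$.

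The third step is to control the three terms. For the martingale term, I would apply a central limit theorem for martingale-difference arrays (e.g. \cite{DUF1997}): the conditional covariance $\bkE[\xi_{k+1}\xi_{k+1}^T \mid \Fcal_k]$ converges almost surely to $\Sigma$ because $\thetachap_k \to \theta$ a.s. (Theorem \ref{theops}) and $\Sigma$ is precisely $\bkE[\nabla_y g(X,\theta)\nabla_y g(X,\theta)^T]$; the Lindeberg condition follows from the moment assumption $\bkE[\norm{X-\mu}^{12}]<\infty$. This gives $\sqrt n \cdot \frac1n\sum_k \xi_{k+1} \tendloi \mathcal{N}(0,\Sigma)$, and by Slutsky the whole expression converges to $\Gamma_\theta^{-1}\mathcal{N}(0,\Sigma)$, which has the stated sandwich covariance $\Gamma_\theta^{-1}\Sigma\,\Gamma_\theta^{-1}$. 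The telescoping term is handled by Abel summation: it reduces to a boundary contribution plus a sum involving $(\gamma_{k}^{-1}-\gamma_{k-1}^{-1})(\thetachap_k-\theta)$, and using $\bkE[\norm{\thetachap_k-\theta}^2]\leq C_1 k^{-\alpha}$ from Theorem \ref{theol2l4} one checks this is $o(n^{-1/2})$ in quadratic mean; the remainder $\frac1n\sum \delta_k$ is likewise $o(n^{-1/2})$ since $\bkE[\norm{\delta_k}]\leq C\,\bkE[\norm{\thetachap_k-\theta}^2]=O(k^{-\alpha})$ and $\alpha>1/2$ guarantees summability at the right rate.

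The main obstacle I anticipate is the rigorous control of the telescoping and remainder terms at exactly the $\sqrt n$ scale, where the constraint $\alpha \in \,]1/2,1[$ is used delicately: one needs the step $\gamma_n = c_\gamma n^{-\alpha}$ to decay slowly enough that the averaging washes out the bias but fast enough that $\sum \gamma_n^2 < \infty$ and the second-order terms remain negligible. This is where the high moment assumption $\bkE[\norm{X-\mu}^{12}]<\infty$ and the sharp quadratic-mean rate $\bkE[\norm{\overline\theta_n-\theta}^2]\leq C/n$ of Theorem \ref{theol2moy} become indispensable, as they provide the uniform integrability and the sharp rates required to pass from convergence in distribution of the martingale part to convergence of the full averaged estimator.
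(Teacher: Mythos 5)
Your proposal follows essentially the same route as the paper: the decomposition $\thetachap_{n+1}-\theta = (\thetachap_n-\theta)-\gamma_n\Phi(\thetachap_n)+\gamma_n\xi_{n+1}+r_n$, the linearization $\Phi(\thetachap_n)=\Gamma_\theta(\thetachap_n-\theta)+\delta_n$ with $\|\delta_n\|\leq C\|\thetachap_n-\theta\|^2$ (Proposition \ref{majdelta}), Abel summation, and the $L^2$ rates of Theorem \ref{theol2l4} to kill the boundary, telescoping, $\delta_k$ and $r_k/\gamma_k$ terms at the $\sqrt{n}$ scale; your rate computations for these terms are correct. The one step where you genuinely diverge is the martingale term. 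You apply a martingale CLT directly to $(\xi_{n+1})$, invoking almost sure convergence of $\bkE[\xi_{k+1}\xi_{k+1}^T\mid\Fcal_k]$ to $\Sigma$ "because $\thetachap_k\to\theta$ a.s."; the paper instead writes $\xi_{n+1}=\xi_{n+1}'+\epsilon_{n+1}+\epsilon_{n+1}'$, where $\xi_{n+1}'$ is the score evaluated at the true $\theta$ (hence i.i.d.), and shows the two correction martingales are negligible in $L^2$, so that an ordinary CLT applies to $\xi'$. Both routes are legitimate, but yours is not as free as you make it sound: the conditional covariance involves the normalized directions $(Z_n-X)/\|Z_n-X\|$, whose dependence on $Z_n$ is singular where $X=Z_n$, so the continuity of $y\mapsto\bkE[\nabla_yg(X,y)\nabla_yg(X,y)^T]$ at $\theta$ is exactly the nontrivial point. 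The paper handles it through the bound $\bkE\bigl[\bigl\| \tfrac{Z_n-X}{\|Z_n-X\|}-\tfrac{\mu-X}{\|\mu-X\|}\bigr\| \mid \Fcal_n\bigr]\leq C\|Z_n-\mu\|$, which rests on the small-ball estimate of Lemma \ref{lemsmallball} (Assumption [A1]); you would need the same ingredient to justify your a.s. convergence claim, so this should be stated rather than asserted. Two minor points: the a.s. finiteness of the projections comes from Theorem \ref{theops}, and what is actually used at the $\sqrt{n}$ scale is the quantitative tail bound $\mathbb{P}[\thetachap_n-\gamma_n\nabla_yg(X_{n+1},\thetachap_n)\notin\calK]=O(n^{-p\alpha})$ with $p=6$ (this is where the twelfth moment enters), not the a.s. statement; and Theorem \ref{theol2moy} is not needed for the CLT, contrary to your closing remark --- the decomposition is self-contained.
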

\vspace{1ex}
\def\Abar{\overline{A}}
\def\Zbar{\overline{Z}}

From result (\ref{TLC}) of Theorem \ref{theotlc}, 
we easily derive that
\begin{equation}
\sqrt{n}\,\Sigma^{-1/2} \Gamma_{\theta} 
\pa{\overline{\theta}_{n} - \theta} \tendloi
\mathcal{N}\pa{0, I_{d+1}}.
\end{equation}
Therefore, in order to build confidence balls or statistical tests
for the parameters of the sphere,
matrices $\Gamma_{\theta}$ and
$\Sigma$
must be estimated.

\noindent
Let us decompose $\overline{\theta}_{n}$ under the form $( \Zbar_n , \Abar_n)$ 
where $\Zbar_n \in \bkR^{d}$ estimates the center $\mu$ 
and $\Abar_n \in \Rplus$ the radius $r^\ast$,
and let us denote $U_n := (X_n - \Zbar_n) / \norm{X_n - \Zbar_n}$.
Then we can estimate $\Gamma_{\theta}$ and $\Sigma$ 
by $\widehat\Gamma_n$ and $\widehat\Sigma_n$ iteratively as follows
\begin{align*}
n \widehat\Gamma_{n} & = 
(n-1) \widehat\Gamma_{n-1} + 
\begin{pmatrix}
& \pa{1 - \frac{\Abar_n}{\|X_n - \Zbar_n\|}} I_d + 
\frac{\Abar_n}{\left\| X_n - \Zbar_n \right\|} U_n \, U_n^T & U_n \\
& U_n^T & 1
\end{pmatrix}, \\
n \widehat\Sigma_{n} & = (n-1) \widehat\Sigma_{n-1} + 
\begin{pmatrix}
X_n - \Zbar_n + \Abar_n\,U_n \\
\Abar_n - \| X_n - \Zbar_n\|\end{pmatrix} 
\begin{pmatrix}
X_n - \Zbar_n + \Abar_n\,U_n \\
\Abar_n - \| X_n - \Zbar_n\|\end{pmatrix}^T ,
\end{align*}
where $\widehat\Sigma_1 = I_{d+1}$ and $\widehat\Gamma_1 = I_{d+1}$
to avoid usual problems of invertibility.
It is not hard to show that 
$\widehat\Gamma_{n}$ and
$\widehat\Sigma_{n}$ respectively converge to
$\Gamma_{\theta}$ and $\Sigma$ and then deduce that
\begin{equation}\label{ConvergenceQn}
Q_n := \sqrt{n}\,\widehat\Sigma_{n}^{-1/2} 
\widehat\Gamma_{n} 
\pa{\overline{\theta}_{n} - \theta} \tendloi
\mathcal{N}\pa{0, I_{d+1}}.
\end{equation}
The simulation study of the next section will illustrate the good approximation
of the distribution of $Q_n$ by the standard gaussian for moderate
sample sizes.

%%%%%%%%%%%%
\section{Some experiments on simulated data} \label{section_experim_simul}
%%%%%%%%%%%% 

We study in this section the behavior of the PRM and averaged algorithms on simulated data in the case $d=3$, for small to moderate sample sizes. 
This section first begins with the specification of the compact set involved in the definition of the PRM algorithm which is of course a crucial point. 
We then study the performance of the two algorithms in the case of the whole sphere with the distributions of Examples \ref{ExampleUn} and \ref{ExampleDeux}. 
Finally, we consider the case of the truncated sphere (a half-sphere) 
and we compare our strategy with the one proposed by \cite{Brazey2014}.
\vspace{1ex}

In this simulation study, we shall always consider the same sphere
defined by its center $\mu = (0,0,0)^{T}$ and its radius  $r=50$.
In addition, to reduce sampling effects,  our  results are based on 
$200$ samples of size $n$.
Finally, let us mention that simulations were 
carried out using the statistical software R (see R Core Team, 2013).

%%%%%%%%%%%%%%%%%%	
\subsection{Choice of the compact set and of the projection}
%%%%%%%%%%%%%%%%%%%

We discuss here the crucial point of the choice
of the compact set $\calK$ and of the projection $\pi$ 
involved in the definition of the PRM algorithm. 
The main problem is to find a compact set containing the unknown
parameter $\theta$.
We propose to build a preliminary estimation of $\theta$,
using a geometric approach which consists in finding the center and the radius
of a sphere of $\bkR^3$ from 4 non-coplanar distinct points.
We denote by $(\mu_0, r_0)$ this initial estimate of $\theta$.
From this estimate, we define the compact set
$\calK$ by
$\calK := \calK_{\mu_0}\! \times\! \calK_{r_0}$ 
with
$\calK_{\mu_0} := \overline{\mathcal{B}(\mu_0 , r_0 / 10)}$
and $\calK_{r_0} :=  \overline{\mathcal{B}(r_0, r_0/10)}$,
where the choice of the value $r_0 / 10$ for the radius of the balls 
is justified by the discussion about Assumption [A3]
in Section 3.2.
We then define the projector $\pi$ as follows: 
for any $y=(z,a) \in \bkR^3 \times \bkR_+^*$,
we set $\pi(y) := (\pi_{\mu_0}(z), \pi_{r_0}(a))$ with
\begin{equation*}
\pi_{\mu_0}† (z) :=   \left\{\begin{array}{cl}
z & \text{if } z \in \calK_{\mu_0} \\
\mu_0 + \dfrac{r_0}{10} \dfrac{(z - \mu_0)}{\norm{z-\mu_0}} & \text{otherwise }\\
\end{array}\right.
\end{equation*}
and
\begin{equation*}
\pi_{r_0} (a) :=   \left\{\begin{array}{cl}
a & \text{if } a \in \calK_{r_0} \\
r_0 + \dfrac{r_0}{10} \dfrac{(a - r_{0})}{\abs{a-r_{0}}} & \text{otherwise }\\
\end{array}\right.
\end{equation*}
With this strategy, we can raisonnably hope that
if our initial estimate is not too poor, 
then the true parameter belongs to $\calK$
and the quadratic criteria $G$ is convex on $\calK$.
We will see below that even if this preliminary estimation is rough, 
the true parameter belongs to $\calK$ and the PRM algorithm improves
the estimation of $\theta$.
\vspace{1ex}

Let us now describe our strategy to obtain a preliminary estimation of the parameter $\theta = (\mu , r^{\star})$.
Since the data points are spread around the sphere, 
the estimation of the parameters from only one quadruplet
of points is not robust to random fluctuations. 
In order to make the estimation more robust, we consider instead $N$ quadruplets sampled with replacement from the first $K$ points of the sample $X_{1},\ldots,X_n$. 
For each quadruplet, we calculate the center of the sphere which passes through these four points, which gives a sequence of centers 
$\left(\widehat{\mu}_{i}\right)_{1 \leq i \leq N}$. 
The initial estimate of the center, denoted by $\mu_{0}$, is then computed as the median point. 
Finally, we obtain an estimation of the radius by calculating the empirical mean
of the sequence $(\norm{X_i - \mu_{0}})_{1\leq i\leq 50}$.
\vspace{1ex}

A simulation study carried out for various values of $K$ and $N$
in the case of the whole and truncated sphere,
shows that by taking $K=50$ and $N=200$, we obtain
a preliminary estimation of $\theta$ sufficiently good to ensure
that the compact $\calK$ contains $\theta$.
\vspace{1ex}

To close this section, let us mention that although the 
initial estimate is quite accurate, it is necessary to project 
the Robbins-Monro algorithm to ensure the convergence
of the estimator.
Indeed, taking a step sequence of the form 
$\gamma_n = c_\gamma n^{-\alpha}$,
the results given in Table \ref{tabrm}  
show that for some values of $c_\gamma$ and $\alpha$,
the parameter $\theta$ is poorly estimated by
the Robbins-Monro algorithm, while the PRM algorithm (Table \ref{tabprm}) is less sensitive to the step sequence choice.

\begin{table}[H]
\begin{tabular}{|c|c|c|c|c|c|c|}
\cline{3-7} 
\multicolumn{1}{c}{} &  & \multicolumn{1}{c}{} & \multicolumn{1}{c}{} & \multicolumn{1}{c}{$\alpha$} & \multicolumn{1}{c}{} & \tabularnewline
\cline{3-7} 
\multicolumn{1}{c}{} &  & 0.51 & 0.6 & 0.66 & 0.75 & 0.99\tabularnewline
\hline 
 & 1 & 0.27 & 0.14 & 0.09 & 0.06 & 0.23\tabularnewline
\cline{2-7} 
$c_{\gamma} $ & 5 & $10^{8}$ & $10^{6}$ & $10^{5}$ & $10^{4}$
 & $10^{5}$\tabularnewline
\cline{2-7} 
& 10 & $10^{31}$ & $10^{18}$ & $10^{14}$ & $10^{10}$ & $10^{6}$\tabularnewline
\hline 
\end{tabular} 
\caption{Robbins-Monro algorithm.
Errors in quadratic mean of the 200 estimations of 
the center $\mu$ for samples of size $n=2000$ in the case of the 
distribution of Example \ref{ExampleUn}.}
\label{tabrm} 
\end{table}
\begin{table}[H]
\begin{tabular}{|c|c|c|c|c|c|c|}
\cline{3-7} 
\multicolumn{1}{c}{} &  & \multicolumn{1}{c}{} & \multicolumn{1}{c}{} & \multicolumn{1}{c}{$\alpha$} & \multicolumn{1}{c}{} & \tabularnewline
\cline{3-7} 
\multicolumn{1}{c}{} &  & 0.51 & 0.6 & 0.66 & 0.75 & 0.99\tabularnewline
\hline 
 & 1 & 0.28 & 0.15 & 0.09 & 0.05 & 0.24\tabularnewline
\cline{2-7} 
$c_{\gamma}$ & 5 & 1.55 & 0.76 & 0.48 & 0.24 & 0.05\tabularnewline
\cline{2-7} 
 & 10 & 3.22 & 1.35 & 0.94 & 0.43 & 0.08\tabularnewline
\cline{2-7} 
\hline 
\end{tabular}
\caption{PRM algorithm.
Errors in quadratic mean of the 200 estimations of 
the center $\mu$ for samples of size $n=2000$ in the case of the 
distribution of Example \ref{ExampleUn}.}
\label{tabprm}
\end{table}

In the sequel of the simulation study, 
we take a step sequence of the form 
$\gamma_{n}:= n^{-2/3}$ 
($\alpha=2/3$ is often considered as the optimal choice in the literature).

%%%%%%%%%%%%
\subsection{Case of the whole sphere}
%%%%%%%%%%%

In what follows, we are interested in the behavior of the PRM and averaged algorithms when samples are distributed on the whole sphere
according to the distribution of Example \ref{ExampleUn} with $\delta=0.1$.
\vspace{1ex}

Figure \ref{boite50rm} shows that the accuracy of the estimations increases with the sample size.
In particular, as expected, the PRM algorithm significantly improves
the initial estimations of the center and the radius (see the first
boxplots which correspond to the initial estimations). 
Moreover, as expected in the case of the "whole sphere", we can see
that the three components of the center $\mu$ are estimated
with the same accuracy.

\begin{figure}[H]
\includegraphics[width=16cm,height=5cm]{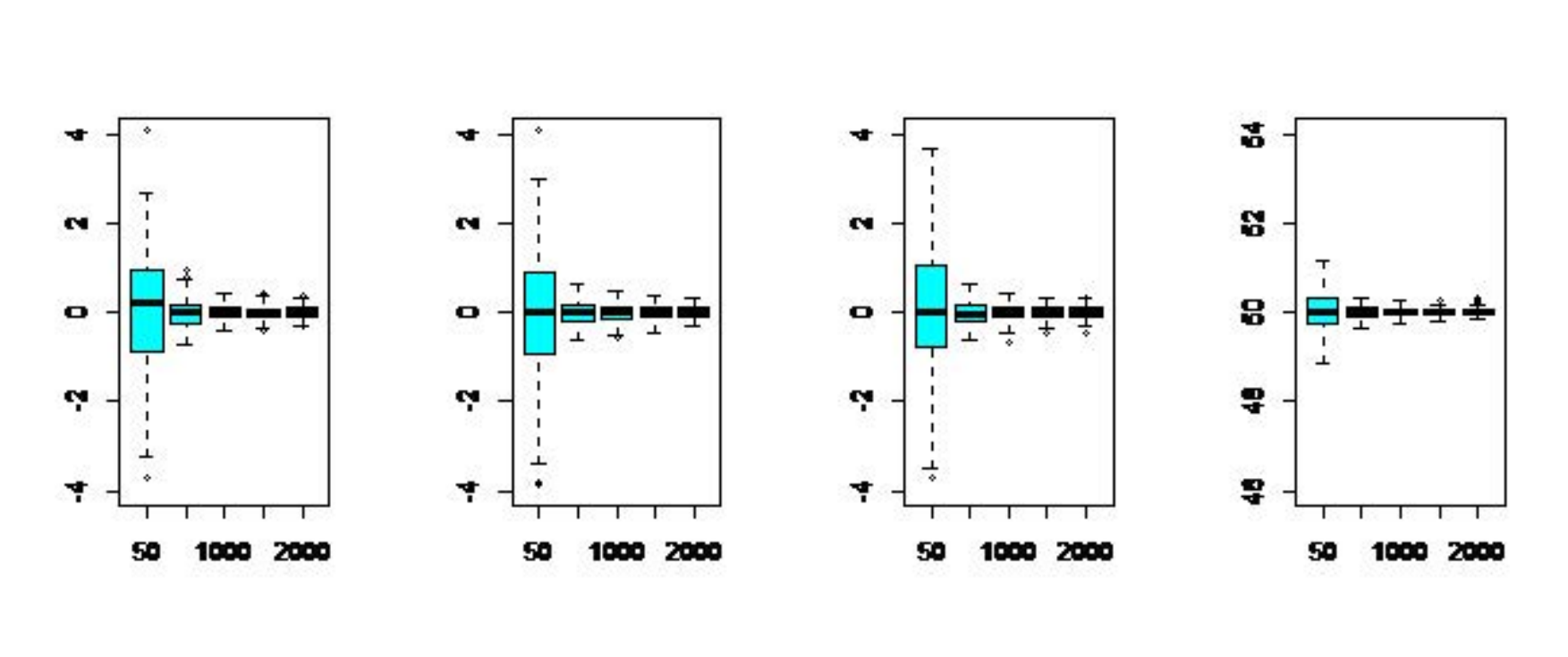}
\caption{Whole sphere with
distribution of Example \ref{ExampleUn}.
From the left to the right, boxplots of estimates of 
$\mu_x , \mu_y , \mu_z$ and $r$ obtained with the PRM algorithm for different sample sizes.}
\label{boite50rm}
\end{figure} 

Let us now examine the gain provided by the use of the averaged
algorithm. 
Figure \ref{boite50av} shows that for small sample sizes, the performances of the two algorithms
are comparable, but when $n$ is greater than $500$, the averaged 
algorithm is more accurate than the PRM algorithm.
We can even think that by forgetting the first estimates
of the PRM algorithm, we improve the behavior of the averaged algoritm
when the sample size is small.

\begin{figure}[H]
\begin{tabular}{cc}
\includegraphics[scale=0.4]{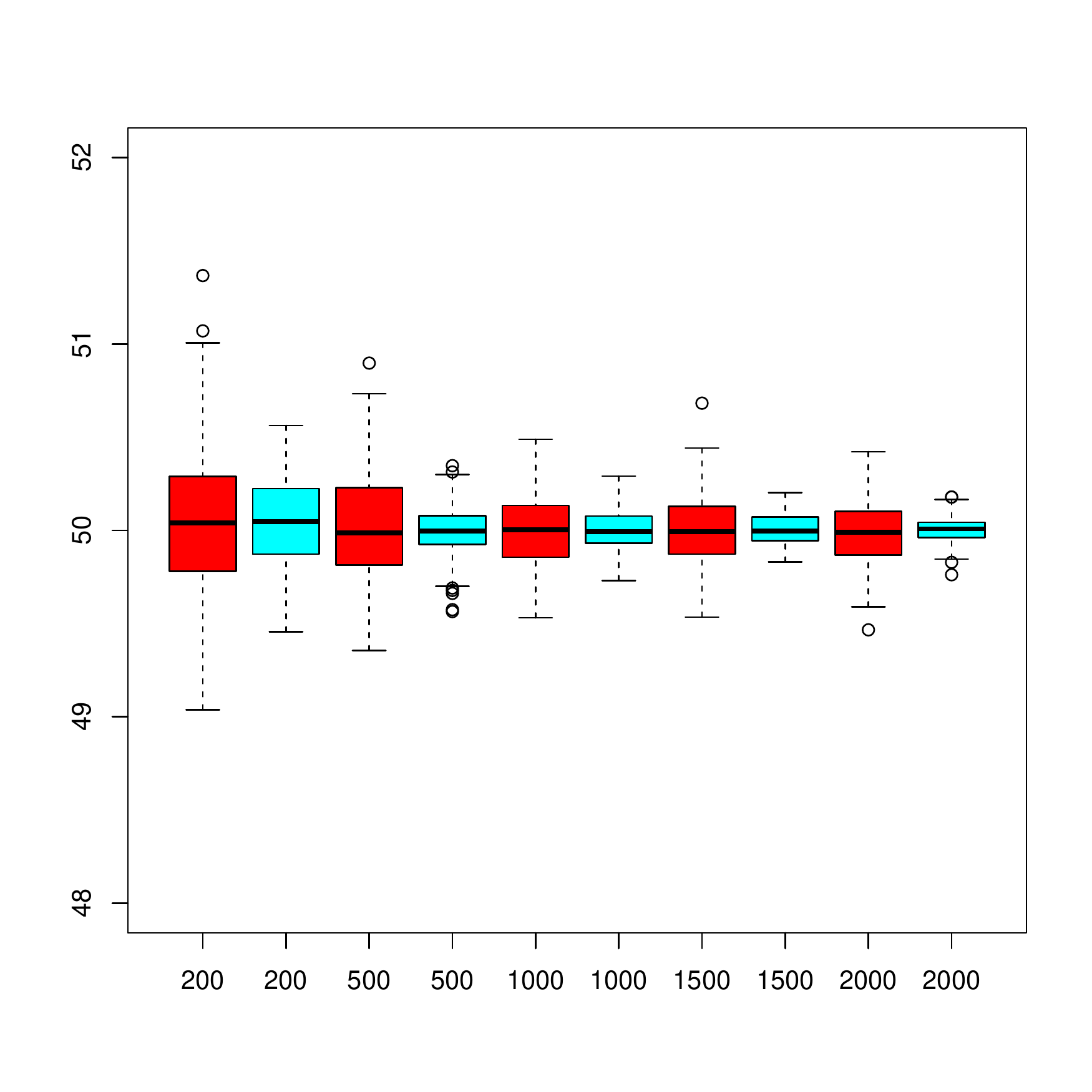}
&
\includegraphics[scale=0.4]{prmavrayon.pdf}
\end{tabular}
\caption{Whole sphere with
distribution of Example \ref{ExampleUn}.
Boxplots of estimates of $\mu_y$ (left) and 
$r$ (right) obtained with the PRM algorithm (in red) and with the averaged algorithm (in blue) for different sample sizes.}
\label{boite50av}
\end{figure}

Finally, let us study the quality
of the Gaussian approximation of the distribution of $Q_n$
for a moderate sample size.
This point is crucial for building confidence intervals or statistical tests for the parameters of the sphere. 

Figure \ref{fignorm} shows that this approximation
is reasonable when $n=2000$. 
Indeed, we can see that the estimated density 
of each component of $Q_n$ is well superimposed with the density of the 
$\mathcal{N}(0,1)$.
To validate these approximations, we perform
a Kolmogorov-Smirnov test at level $5\%$. 
The test enables us to conclude that the normality is not rejected for each component of $Q_n$.

\begin{figure}[H]
\includegraphics[width=16cm,height=5cm]{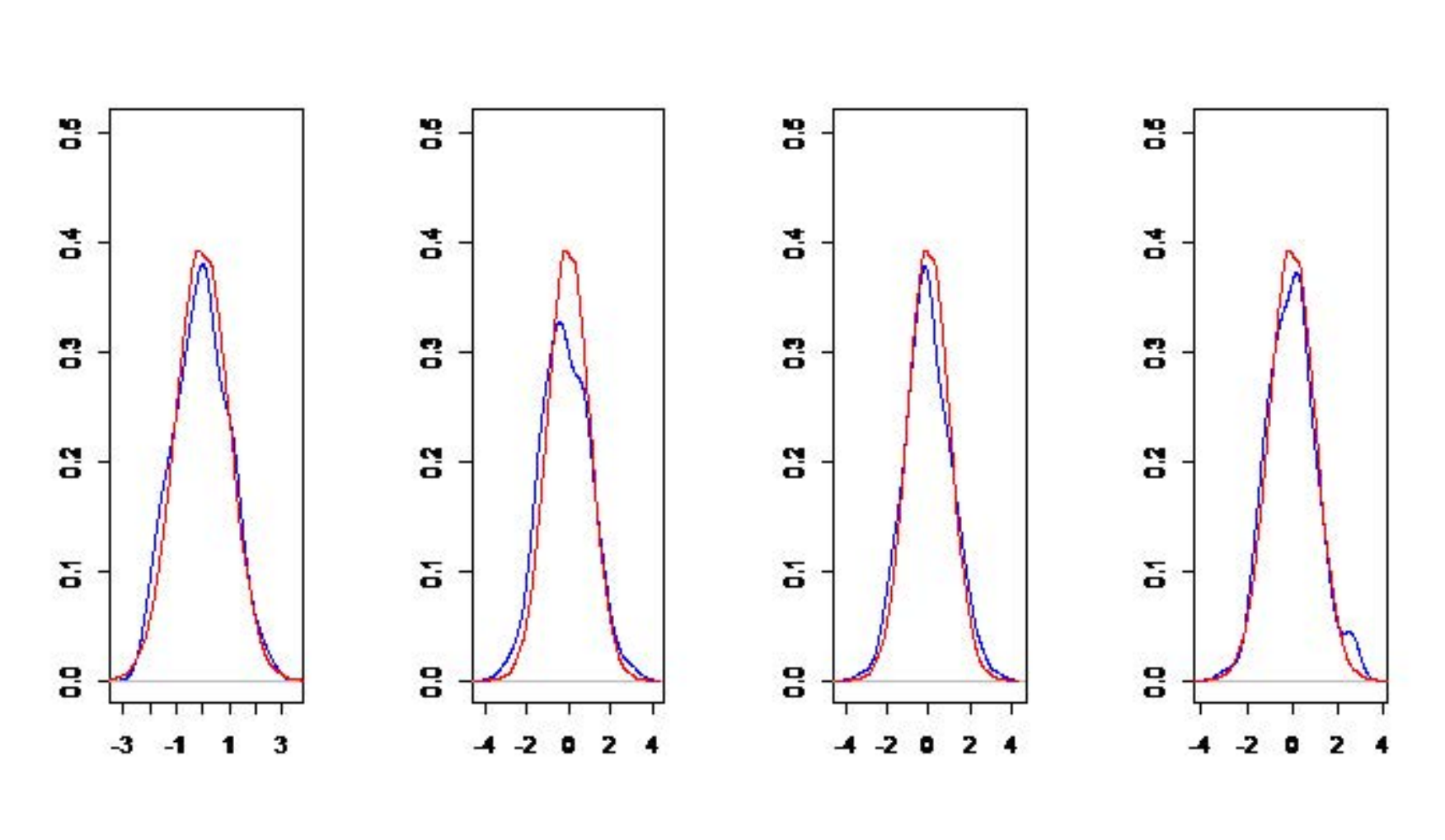}
\caption{
From the left to the right, estimated densities of each components of $Q_{2000}$ superimposed with the standard gaussian density.}
\label{fignorm}
\end{figure}

%%%%%%%%%%%
\subsection{Comparison with 
a backfitting-type algorithm in the case of a half-sphere}
%%%%%%%%%%%%%

In this section, we compare the performances of the averaged algorithm with the ones of the backfitting algorithm introduced by \cite{Brazey2014}. 
In what follows, we consider samples coming from
the distribution of Example \ref{ExampleDeux}, 
with $\sigma =1$, 
in the case of the half sphere defined
by the set of points whose $y$-component
is positive.
\vspace{1ex}

Results obtained with the two algorithms are presented 
in Figure \ref{denisrm}. 
We focus on parameter
$\mu_y$ for the center since it is the more difficult to estimate. 
We can see that even if the backfitting (BF for short) algorithm is better
than the averaged algorithm, the performances are globally
good, which validates the use of our algorithm for estimating
the parameters of a sphere from 3D-points distributed
around a truncated sphere.
Recall that convergence results are available for our algorithm
in the case of the truncated sphere, contrary to the backfitting algorithm
for which no theoretical result is available in that case.

\begin{figure}[H]
\begin{tabular}{cc}
\includegraphics[scale=0.4]{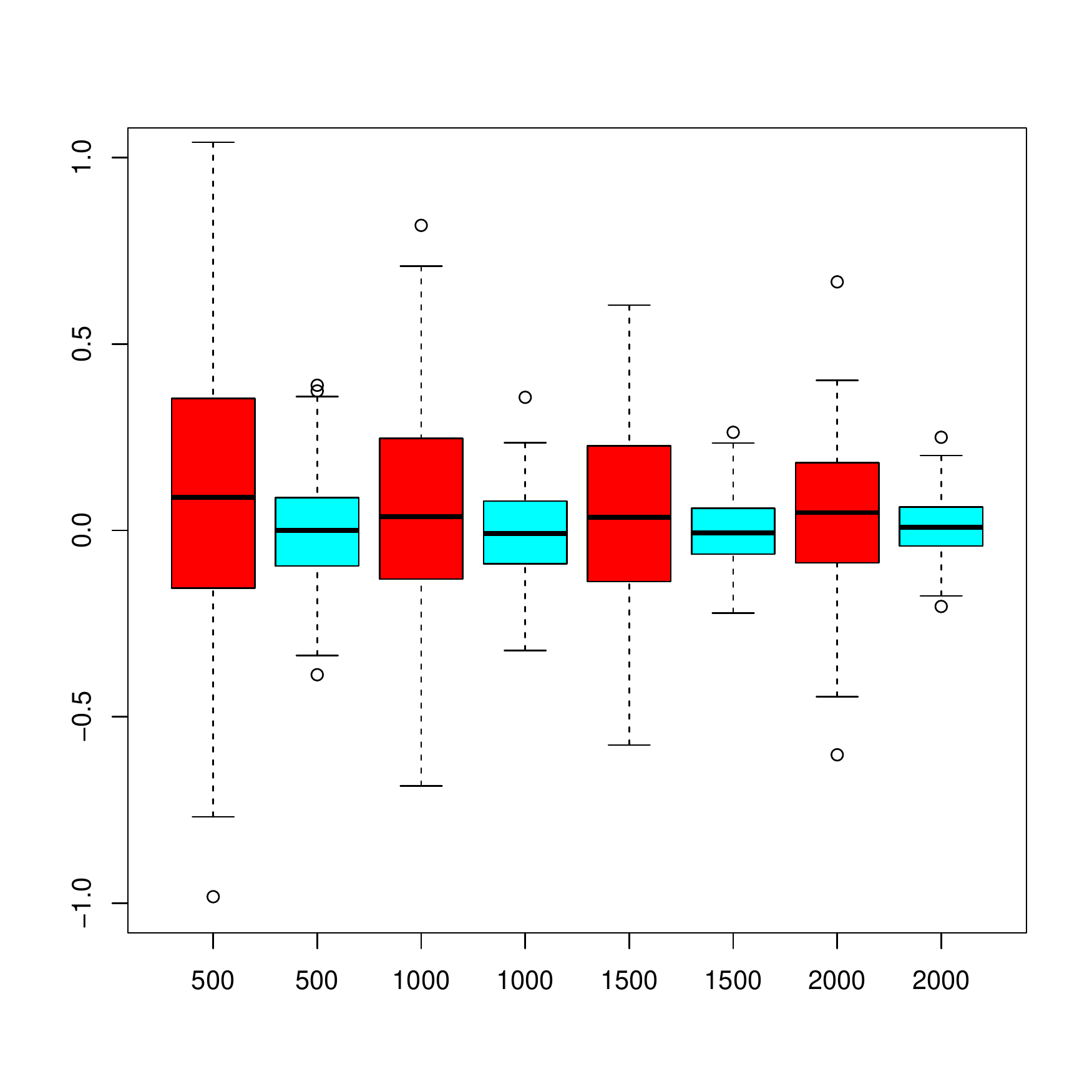} 
&
\includegraphics[scale=0.4]{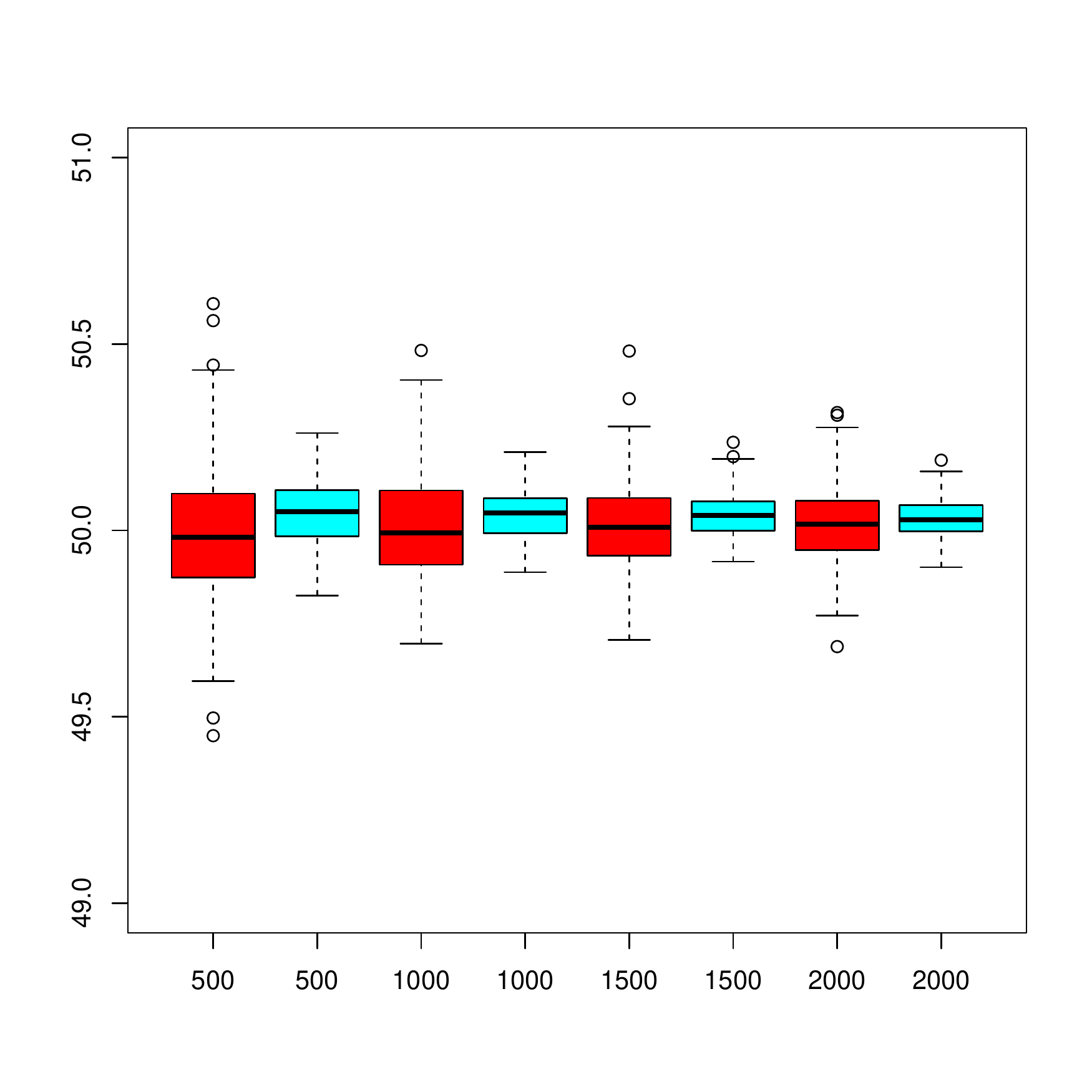} 
\end{tabular}

\caption{Comparison of averaged and BF algorithms.
Boxplots of the estimates of 
$\mu_y$ (on the left) and $r$ (on the right),
obtained with the BF algorithm (in blue) and with the averaged algorithm (in red) for the half sphere in the case of Example \ref{ExampleDeux}.}
 \label{denisrm}
\end{figure}

%%%%%%%%%%%%
\section{Conclusion}
%%%%%%%%%%%

We presented in this work a new stochastic algorithm
for estimating the center and the radius of a sphere 
from a sample of points spread around the sphere,
the points being distributed around the complete sphere or
only around a part of the sphere.
\vspace{1ex}

We shown on simulated data that this algorithm
is efficient, less accurate than the backfitting algorithm proposed
in \cite{Brazey2014} but for which no convergence  result
is available for the case of the truncated sphere.
Therefore, our main contribution is to have proposed
an algorithm 
for which we have given asymptotic results such as its strong consistency and its asymptotic normality which can be useful to build confidence balls or statistical tests for example,
as well as non asymptotic results such as the rates of convergence in quadratic mean.
\vspace{1ex}

A possible extension of this work could be to extend
the obtained results to the case of the finite mixture model.
This framework has been considered in \cite{Brazey2014}
but no convergence result is established. 
Proposing a stochastic algorithm for estimating the different parameters of the model 
and obtaining convergence results 
would be a nice challenge.

\medskip

\noindent \textbf{Acknowledgements.} The authors would like to thank Aurélien Vasseur
for his contribution to the start of this work. We also would like to thank Peggy C\'enac and Denis Brazey for their constructive remarks and for their careful reading of the manuscript that allowed to improve the presentation of this work.
Finally, we would like to thank Nicolas Godichon for his help in the creation of Figure 5.

%%%%%%%%%
\appendix 
%%%%%%%%%
%\section*{Appendix A}\label{appendixA}
\section{Some convexity results and proof of proposition \ref{propfortconv}}
\label{appendixA}
%%%%%%%%%%%%%%%%%%%%%%%

\renewcommand{\thesection}{\Alph{section}} 
\renewcommand{\theequation}
{\thesection.\arabic{equation}} \setcounter{section}{1}  
\setcounter{equation}{0}
%%%%%%%%%%%%%%%%%%%%%%%%%%%%%%%%%%%%%%%%%%%%%%%%%%%%%%%%%%%%%%%%%%%%%%%%%%%%%%%%%%%%%%%%%
The following lemma ensures that the Matrix in Assumption [A3] is well defined and that the Hessian of $G$ exists for all $y \in \mathbb{R}^{d}\times \mathbb{R}$.
\begin{lem}\label{lemsmallball}
Assume [A1] holds. If $d \geq 3$, there is a positive constant $C$ such that for all $z \in \mathbb{R}^{d}$,
\[
\mathbb{E}\left[ \frac{1}{\left\| X - z \right\| }\right] \leq C.
\]
Moreover, suppose that $W$ admits a bounded density, then for all $d \geq 2$, there is a positive constant $C$ such that for all $z \in \mathbb{R}^{d}$,
\[
\mathbb{E}\left[ \frac{1}{\left\| X - z \right\|}\right] \leq C.
\]
\end{lem}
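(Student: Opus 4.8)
The plan is to reduce the expectation to a one-dimensional integral over the radial variable by conditioning on $W$, and then to bound the resulting angular integral uniformly. Write $t := \norm{z-\mu}$ and recall from \eqref{Def_X} that $\norm{X-\mu} = rW$, so conditionally on $W$ the vector $X-\mu$ is uniformly distributed on $\acc{rW\,\omega : \omega\in\Omega}$. Since the integrand is nonnegative I would enlarge $\Omega$ to the whole sphere $S^{d-1}$ and factor out $\rho := rW$, obtaining
\[
\bkE\cro{\frac{1}{\norm{X-z}}} = \bkE_W\cro{\frac{1}{\abs{\Omega}}\int_\Omega \frac{d\sigma(\omega)}{\norm{rW\omega-(z-\mu)}}} \leq \frac{1}{\abs{\Omega}}\,\bkE_W\cro{\frac{1}{rW}\,J\!\pa{\frac{t}{rW}}},
\]
where $\abs{\Omega}$ is the surface measure of $\Omega$ and $J(\abs{u}) := \int_{S^{d-1}} \norm{\omega-u}^{-1}\,d\sigma(\omega)$ depends only on $\abs{u}$. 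Using the polar angle $\gamma$ about the direction $u$, this is $J(\abs{u}) = \abs{S^{d-2}}\int_0^\pi \sin^{d-2}\gamma\,\pab{1+\abs{u}^2-2\abs{u}\cos\gamma}^{-1/2}\,d\gamma$.

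For $d\geq 3$ the claim is that $J$ is bounded by a dimensional constant $c_d$ uniformly in $u$. I would use the two lower bounds $1+\abs{u}^2-2\abs{u}\cos\gamma \geq (1-\abs{u})^2$ and $\geq 4\abs{u}\sin^2(\gamma/2)$: for $\abs{u}\notin[1/2,2]$ the first gives a denominator bounded below by $1/2$, hence $J\leq 2\abs{S^{d-1}}$, while for $\abs{u}\in[1/2,2]$ the second bounds the integrand by $\sin^{d-2}\gamma\,/\,(\sqrt2\,\sin(\gamma/2))$, whose integral converges since near $\gamma=0$ it behaves like $\gamma^{d-3}$, integrable exactly when $d\geq 3$. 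With $J\leq c_d$ I conclude
\[
\bkE\cro{\frac{1}{\norm{X-z}}} \leq \frac{c_d}{\abs{\Omega}}\,\bkE\cro{\frac{1}{rW}} = \frac{c_d}{\abs{\Omega}}\,\bkE\cro{\norm{X-\mu}^{-1}} \leq \frac{c_d}{\abs{\Omega}}\,\bkE\cro{\norm{X-\mu}^{-2}}^{1/2},
\]
the last step by Cauchy--Schwarz, which is finite by Assumption~[A1]. This gives a bound independent of $z$.

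For $d=2$ the integral $\int_0^\pi \sin(\gamma/2)^{-1}\,d\gamma$ diverges, so $J$ is no longer uniformly bounded: it has a logarithmic singularity at $\abs{u}=1$, and the elementary estimate I would establish is $J(\abs{u})\leq C\pab{1+\abs{\log\abs{1-\abs{u}}}}$ for $\abs{u}\in[1/2,2]$ and $J(\abs{u})\leq C$ otherwise. Writing $\rho=rW$ (which, since $W$ has bounded density $\leq M$, has density $\leq M/r$), I would split the expectation according to whether $\rho\in[t/2,2t]$. Off this region $J$ is bounded, contributing at most $C\,\bkE[(rW)^{-1}]<\infty$ by [A1], uniformly in $t$. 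On the region $\rho\in[t/2,2t]$ one has $1/\rho\leq 2/t$ and $\abs{1-t/\rho}=\abs{\rho-t}/\rho\asymp\abs{\rho-t}/t$, so using $f_\rho\leq M/r$,
\[
\frac{1}{\abs{\Omega}}\int_{t/2}^{2t}\frac{1}{\rho}\,J\!\pa{\frac{t}{\rho}}f_\rho(\rho)\,d\rho \;\lesssim\; \frac{M}{r\,t}\int_{t/2}^{2t}\pab{1+\log\tfrac{t}{\abs{\rho-t}}}\,d\rho \;\lesssim\; \frac{M}{r\,t}\cdot t \;=\; \frac{M}{r},
\]
since $\int_0^t\log(t/\eta)\,d\eta=t$. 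This bound is again uniform in $t$, which finishes $d=2$.

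\textbf{Main obstacle.} The routine part is the integrability $\gamma^{d-3}$ at the origin for $d\geq 3$. The real difficulty is the uniformity in $z$ for $d=2$: there $J$ genuinely blows up as $rW\to\norm{z-\mu}$, and the bounded-density hypothesis on $W$ is exactly what is needed, since it lets the $O(t)$ length (with its integrable logarithm) of the near-singular radial band compensate the $1/\rho\asymp 1/t$ blow-up, yielding a constant independent of $z$.
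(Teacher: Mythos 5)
Your proof is correct, and it takes a genuinely different route from the paper's. The paper works with the layer-cake formula $\mathbb{E}\left[\|X-z\|^{-1}\right]=\int_0^\infty \mathbb{P}\left[\|X-z\|\le t^{-1}\right]dt$: after handling $z$ near $\mu$ by a continuity appeal to [A1], it bounds the small-ball probability $\mathbb{P}\left[\|X-z\|\le t^{-1}\right]$ by the surface measure of the intersection of the sphere of radius $rW$ with a ball of radius $t^{-1}$ (giving $O(t^{-(d-1)})$, justified by a figure), and for $d=2$ it recovers the missing power of $t^{-1}$ from the radial direction via the bounded density of $W$. You instead condition on $W$ and estimate the single-layer potential $J(|u|)=\int_{S^{d-1}}\|\omega-u\|^{-1}d\sigma(\omega)$ of the unit sphere directly, showing it is uniformly bounded when $d\ge 3$ and has only a logarithmic singularity at $|u|=1$ when $d=2$, which the bounded density of $rW$ then integrates away over the band $\rho\in[t/2,2t]$. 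The two arguments rest on the same geometric fact (the sphere has codimension one), but yours is more self-contained and quantitative: it dispenses with the paper's somewhat informal continuity step near $z=\mu$ and its pictorial surface-measure bound, and it isolates exactly why $d\ge 3$ suffices on its own while $d=2$ needs the density hypothesis (the integrability of $\gamma^{d-3}$ at $\gamma=0$ versus the logarithmic blow-up of $J$ at $|u|=1$). The paper's small-ball formulation is in exchange less computation-heavy and would adapt more easily to a non-uniform angular law for $U_\Omega$. Both proofs ultimately invoke $\mathbb{E}\left[(rW)^{-1}\right]<\infty$, which follows from [A1] by Cauchy--Schwarz as you note.
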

Note that for the sake of simplicity, we denote by the same way the two constants.
\begin{proof}[Proof of Lemma \ref{lemsmallball}] \textbf{Step 1: $d \geq 3$}

By continuity and applying Assumption \textbf{[A1]}, there are positive constants $\epsilon , C'$ such that for all $z \in \mathcal{B}\left( \mu , \epsilon\right)$,
\[
\mathbb{E}\left[ \frac{1}{\left\| X - z \right\| }\right] \leq C'.
\]
Moreover, let $z \in \mathbb{R}^{d}$ such that $\left\| z-\mu \right\| \geq \epsilon$, we have
\begin{align*}
\mathbb{E}\left[ \frac{1}{\left\| X -z \right\|} \right] &  = \int_{0}^{+\infty} \mathbb{P}\left[ \left\| X - z \right\| \leq \frac{1}{t}\right]dt \\
& = \int_{0}^{M}\mathbb{P}\left[ \left\| X - z \right\| \leq t^{-1}\right]dt + \int_{M}^{\infty}\mathbb{P}\left[ \left\| X - z \right\| \leq t^{-1}\right]dt \\
& \leq M + \int_{M}^{\infty}\mathbb{P}\left[ \left\| X - z \right\| \leq t^{-1}\right]dt,
\end{align*}
with $M$ positive and defined later. Moreover, let $t \geq M$,
\begin{align*}
\mathbb{P}\left[ \left\| X - z \right\| \leq t^{-1} \right] & = \mathbb{P}\left[ \left\| \mu + rWU_{\Omega} - z \right\| \leq t^{-1}\right] \\
& \leq \mathbb{P}\left[ - t^{-1} + \left\| z - \mu \right\| \leq  rW \leq t^{-1} + \left\| z - \mu \right\| , \left(\mu + rWU_{\Omega}\right) \cap \mathcal{B}\left( z , t^{-1} \right) \neq \emptyset \right] ,
\end{align*}
taking $M = \frac{2}{\epsilon}$. With previous condition on $rW$, calculating $\mathbb{P}\left[ \left( \mu + rWU_{\Omega}\right) \cap \mathcal{B}\left( z , t^{-1} \right) \neq \emptyset \right]$ consists in measuring the intersection between a truncated sphere with radius bigger than ~$\epsilon /2$ with a ball of radius $\frac{1}{t}$, with $\frac{1}{t}\leq \frac{\epsilon}{2}$. This is smaller than the surface of the frontier of the ball (see the following figure).

\begin{figure}[H]
\begin{center}
\includegraphics[scale=0.5]{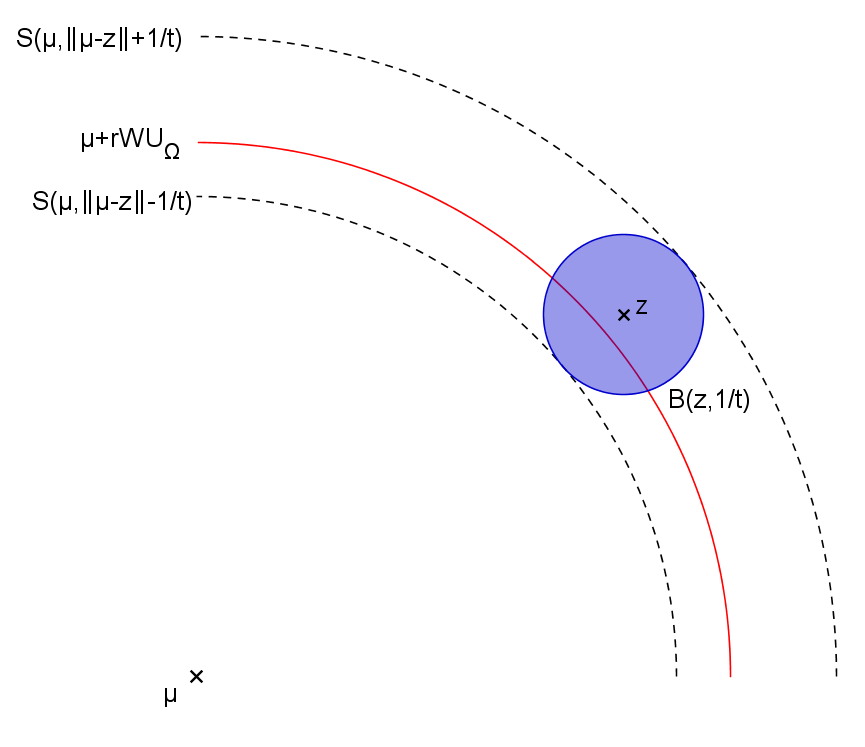}
\end{center}
\caption{}
\label{figannexe}
\end{figure}

Thus, there is a positive constant $k$ such that for all $t \geq M$,
\begin{equation}
\mathbb{P}\left[ \left\| X - z \right\| \leq t^{-1} \right] \leq \frac{k}{t^{d-1}}.
\end{equation}
Finally,
\begin{align*}
\mathbb{E}\left[ \frac{1}{\left\| X-z \right\|}\right] & \leq \frac{2}{\epsilon} + \int_{\frac{2}{\epsilon}}^{+ \infty} k\frac{1}{t^{d-1}}dt \\
& = \frac{2}{\epsilon} + k\frac{\epsilon^{d-2}}{2^{d-2}(d-2)}. 
\end{align*}
We conclude the proof taking $C = \max \left\lbrace C' ,\frac{2}{\epsilon} + k\frac{\epsilon^{d-2}}{2^{d-2}(d-2)} \right\rbrace $.

\bigskip

\textbf{Step 2: $d=2$ and $W$ admits a bounded density} \\
Let $f_{\max}$ be a bound of the density function of $W$. As in previous case, let $z \in \mathbb{R}^{d}$ such that $\left\| z - \mu \right\| \geq \epsilon$, 
\begin{align*}
\mathbb{P}\left[ \left\| X - z \right\| \leq t^{-1} \right] & \leq \mathbb{P}\left[ - t^{-1} + \left\| z - \mu \right\| \leq  rW \leq t^{-1} + \left\| z - \mu \right\| , \left(\mu + rWU_{\Omega}\right) \cap \mathcal{B}\left( z , t^{-1} \right) \neq \emptyset \right] \\
& = \mathbb{P} \left[ \left(\mu + rWU_{\Omega}\right) \cap \mathcal{B}\left( z , t^{-1} \right) \neq \emptyset \Big| - t^{-1} + \left\| z - \mu \right\| \leq  rW \leq t^{-1} + \left\| z - \mu \right\| \right] \\
& \times \mathbb{P}\left[ - t^{-1} + \left\| z - \mu \right\| \leq  rW \leq t^{-1} + \left\| z - \mu \right\| \right] .
\end{align*}
As in previous case, if $t \geq \frac{2}{\epsilon}$, there is a positive constant $k$ such that for all $t \geq \frac{2}{\epsilon}$,
\[
\mathbb{P} \left[ \left(\mu + rWU_{\Omega}\right) \cap \mathcal{B}\left( z , t^{-1} \right) \neq \emptyset \Big| - t^{-1} + \left\| z - \mu \right\| \leq  rW \leq t^{-1} + \left\| z - \mu \right\| \right] \leq k t^{-1}.
\]
Moreover, since $f_{\max}$ is a bound of the density function of $W$,
\begin{align*}
\mathbb{P}\left[ - \frac{1}{t} + \left\| z - \mu \right\| \leq  rW \leq \frac{1}{t} + \left\| z - \mu \right\| \right] & \leq \frac{2rf_{\max}}{t}
\end{align*}
Thus, for all $t \geq \frac{2}{\epsilon}$,
\[
\mathbb{P}\left[ \left\| X - z \right\| \leq t^{-1} \right] \leq \frac{2rf_{\max}k}{t^{2}},
\]
and in a particular case,
\begin{equation}
\mathbb{E}\left[ \frac{1}{\left\| X - z \right\|} \right] \leq \frac{2}{\epsilon} + krf_{\max}\epsilon,
\end{equation}
and one can conclude the proof taking $C= \max \left\lbrace C' , 2\epsilon^{-1} + krf_{\max}\epsilon\right\rbrace$.
\end{proof}

%%%%%%%%%%%%%%
% Proof of Proposition 3.1
%%%%%%%%%%%%%%%
\def\prodscal#1{{\left\langle #1\right\rangle}}

\begin{proof}[Proof of Proposition \ref{propfortconv}]
We want to show there is $c > 0$ such that for any
$y~=~(z,a) ~\in~ \overline{\mathcal{B}(\mu, \varepsilon_\mu)} 
~\times~\overline{\mathcal{B}(r^\ast, \varepsilon_r)}$,
$P(y) := \prodscal{y-\theta\,,\,\Phi(y)} \geq c \norm{y-\theta}$.
We have
\begin{equation}\label{Def_P}
P(y) = P(z,a) = \prodscal{
\begin{pmatrix} z-\mu \\ a - r^{\ast} \end{pmatrix} , 
\begin{pmatrix}
z - \mathbb{E}\cro{X} - a\, \mathbb{E}\left[\dfrac{z - X}{\norm{ z - X}}\right] \\
a - \mathbb{E}\left[ \left\| X - z \right\| \right]
\end{pmatrix}}.
\end{equation}
For any $z\in\bkR^d$, let us set 
$F(z) := \bkE\cro{\norm{X - z}}$
and
$f(z) := \bkE\cro{(z - X) / \norm{z - X}}$.
Note that $f$ is the gradient of $F$.
Using (2.1), we deduce that $F(\mu) = r^\ast$, 
$f(\mu) ~=~ -~ \bkE\cro{U_\Omega}$ and
$\bkE\cro{X} = \mu - r^{\star}f( \mu)$. 
Then, (\ref{Def_P}) can be rewritten as
\begin{align*}
P(y) & = \norm{z - \mu}^{2} + r^{\ast}\prodscal{z - \mu , f(\mu )} -a 
\prodscal{z - \mu , f(z)} + (a-r^{\ast})^{2} - (a - r^{\ast} ) (F(z) - F(\mu)) \\
& = \left\| z - \mu \right\|^{2} - (a-r^{\star})\left\langle z - \mu , f( \mu ) \right\rangle - a \left\langle z - \mu , f(z ) - f( \mu ) \right\rangle + (a-r^{\star})^{2} - (a - r^{\star} ) (F(z) - F(\mu)) .
\end{align*}
Moreover, using the following Taylor's expansions, 
\begin{align*}
& F(z) = F( \mu ) + \langle z - \mu , f(\mu ) \rangle + \frac{1}{2}( z -  \mu )^{T} \nabla f(c) (z-\mu), \\ 
& f(z)  = f(\mu ) + \prodscal{\nabla f (c') ,z-\mu},
\end{align*}
with $c, c' \in [z, \mu]$. We get
\begin{align}
P(y) & = \left\| z - \mu \right\|^{2} - 2(a-r^{\star})\left\langle z - \mu , f(\mu ) \right\rangle - a (z - \mu )^{T}\nabla f (c') (z - \mu )\\
&  + (a -r^{\ast})^{2} - \frac{1}{2}(a-r^{\star})(z- \mu )^{T} \nabla f(c) (z- \mu)
\nonumber
\end{align}
Now, remarking that for any positive constant $A$ and real 
numbers $x,y$, we have 
$2 x y ~\leq ~A ~x^2 ~+ ~y^2 / A$, we derive
\begin{align*}
P(y) & \geq \left\| z - \mu \right\|^{2} - A (a - r^{\ast})^{2} - \frac{1}{A}
\norm{z - \mu}^2 \norm{f(\mu )}^2 - a\, \left\| \nabla f (c) \right\|_{op} \left\| z - \mu \right\|^{2}\\
&  + (a - r^{\ast})^2 -  \frac{1}{2}  \left| a - r^{\star} \right|\left\| \nabla f (c') \right\|_{op} \left\| z - \mu \right\|^{2} . 
\end{align*}
Let us denote by $\displaystyle\lambda_M = 
\sup_{z\in\overline{\mathcal{B}(\mu, \varepsilon_\mu)}}\lambda_{\max} \nabla f (z)$
and choose $A$ such that 
$\norm{f(\mu)}^2~ =~ \norm{\bkE\cro{U_\Omega}}^2~ <~ A ~< ~1$.
Then, for any $z\in\overline{\mathcal{B}(\mu, \varepsilon_\mu)}$
and $a\in\overline{\mathcal{B}(r^\ast, \varepsilon_r)}$, 
we have
\begin{align*}
P(y) & \geq \pa{1 - \frac{1}{A}\norm{f(\mu )}^2 - (r^\ast + \frac{3}{2}\veps_r)\,\lambda_M } \norm{z - \mu}^2 + \pa{1- A} (a - r^{\ast})^2 
\end{align*}
Finally, using assumption \textbf{[A3]}, we close the proof.

\end{proof}

\bigskip

In order to linearize the gradient in the decompositions of the PRM algorithm and get a nice decomposition of the averaged algorithm, we introduce the Hessian matrix of $G$, denoted, for all $y=(z,a) \in \R^{d}\times \R$, by $\Gamma_{y}: \R^{d}\times \R \longrightarrow \R^{d}\times \R$  and defined by :
\begin{equation}\label{eqhess}
\Gamma_{y} = \begin{pmatrix}  
I_d- a \Esp\cro{\dfrac{1}{\| X-z \|}\pa{I_d-\dfrac{(X-z) \otimes (X-z)}{\|X-z\|^2}}}  & & & \Esp\left[\dfrac{X-z}{\|X-z\|}\right]  \\ \\   \Esp\left[\dfrac{X-z}{\|X-z\|}\right]^T  &  & & 1  \end{pmatrix},
\end{equation}
with, for all $z,z',z'' \in \R^{d}$, $z\otimes z' (z'')=\langle z,z'' \rangle z'$. Applying Lemma \ref{lemsmallball}, the Hessian matrix exists for all $y \in \mathbb{R}^{d+1}$.

\begin{prop}\label{majdelta}
Suppose [A1] to [A3] hold, there is a positive constant $C_{\theta}$ such that for all $y \in \mathcal{K}$,
\[
\left\| \Phi (y) - \Gamma_{\theta} \left( y - \theta \right) \right\| \leq C_{\theta} \left\| y - \theta \right\|^{2}.
\]
\end{prop}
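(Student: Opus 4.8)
The plan is to recognise the displayed quantity as the first--order Taylor remainder of the gradient field $\Phi$ at the point $\theta$. From \eqref{Def_Gradient_G} we have $\Phi(\theta)=\nabla G(\theta)=0$, while the matrix $\Gamma_y$ defined above is exactly the Hessian of $G$, so that $\Gamma_\theta=\Gamma_y|_{y=\theta}=\nabla\Phi(\theta)$ and $\Phi(y)-\Gamma_\theta(y-\theta)=\Phi(y)-\Phi(\theta)-\nabla\Phi(\theta)(y-\theta)$. Reusing the notation of the proof of Proposition \ref{propfortconv}, namely $F(z):=\Esp\cro{\norm{X-z}}$ and $f(z):=\Esp\cro{(z-X)/\norm{z-X}}=\nabla F(z)$, I would first write $\Phi(z,a)=\pa{z-\Esp\cro{X}-a\,f(z),\ a-F(z)}$ and, using $\Esp\cro{X}=\mu-r^\ast f(\mu)$ and $F(\mu)=r^\ast$ established there, decompose the two blocks of the remainder explicitly. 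A short computation gives, for the radius component, $-\pab{F(z)-F(\mu)-\prodscal{f(\mu),z-\mu}}$, i.e. minus the second--order remainder $R_F(z)$ of $F$; and for the center component, $-(a-r^\ast)\,\Gamma(\mu)(z-\mu)-a\,R_f(z)$, where $R_f(z):=f(z)-f(\mu)-\Gamma(\mu)(z-\mu)$ and $\Gamma=\nabla f$ coincides with the matrix $\Gamma(\cdot)$ of Assumption [A3].

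The terms that involve only first derivatives are then controlled directly by Lemma \ref{lemsmallball}. For the radius block, $\nabla^2F=\Gamma$ and $\norm{\Gamma(z)}_{op}\le\Esp\cro{\norm{X-z}^{-1}}\le C$ uniformly in $z$ by Lemma \ref{lemsmallball}, so the Lagrange form $R_F(z)=\tfrac12(z-\mu)^T\Gamma(\xi)(z-\mu)$ yields $\abs{R_F(z)}\le\tfrac{C}{2}\norm{z-\mu}^2$. Likewise $\norm{\Gamma(\mu)}_{op}\le C$, hence the cross term is bounded by $C\,\abs{a-r^\ast}\,\norm{z-\mu}\le\tfrac{C}{2}\norm{y-\theta}^2$ after $2uv\le u^2+v^2$, and $\abs{a}\le r^\ast+R_r$ on $\calK$. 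Everything therefore reduces to the quadratic bound $\norm{R_f(z)}\le C\norm{z-\mu}^2$ for $z$ in the center--ball of $\calK$.

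The hard part is exactly this last estimate, since $R_f$ is a first--order remainder of $f=\nabla F$ and thus encodes the third derivative of $G$. Writing $R_f(z)=\int_0^1\pab{\Gamma(\mu+t(z-\mu))-\Gamma(\mu)}(z-\mu)\,dt$, it suffices to show that $z\mapsto\Gamma(z)$ is Lipschitz on the compact set; but differentiating $\Gamma(z)=\Esp\cro{\norm{X-z}^{-1}\pab{I_d-(X-z)(X-z)^T/\norm{X-z}^2}}$ in $z$ produces integrands of order $\norm{X-z}^{-2}$, which Lemma \ref{lemsmallball} does not reach. The plan is therefore to prove the companion uniform bound $\sup_{z}\Esp\cro{\operatorname{dist}(X,[\mu,z])^{-2}}<\infty$ by the same geometric argument as in Lemma \ref{lemsmallball}: for $\norm{z-\mu}$ bounded below one estimates $\Pb\cro{\norm{X-z}\le t^{-1}}$ by the measure of the intersection of a sphere of radius $\gtrsim\epsilon$ with a ball of radius $t^{-1}$, which is $O(t^{-(d-1)})$, and in the borderline dimension $d=3$ one exploits the continuity (resp. boundedness) of the density of $W$ to gain the extra factor $t^{-1}$ needed for integrability of the inverse square; for $\norm{z-\mu}$ small one invokes continuity together with Assumption [A1]. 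Once this uniform second--moment control is in hand, $\Gamma$ is Lipschitz, $\norm{R_f(z)}\le C\norm{z-\mu}^2$ follows, and summing the three contributions gives the inequality with $C_\theta$ depending on $r^\ast$, $R_r$ and these uniform bounds. I expect the uniform control of the singular second moment to be the genuine obstacle; the remaining manipulation of Taylor remainders is routine bookkeeping.
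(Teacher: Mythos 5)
Your reduction of $\Phi(y)-\Gamma_{\theta}(y-\theta)$ into the three blocks $-R_F(z)$, $-(a-r^{\ast})\Gamma(\mu)(z-\mu)$ and $-a\,R_f(z)$ is correct, and the first two are indeed controlled by Lemma \ref{lemsmallball} exactly as you say. The gap is in the step you yourself single out as the genuine obstacle: you never establish the uniform bound $\sup_{z}\mathbb{E}\bigl[\operatorname{dist}(X,[\mu,z])^{-2}\bigr]<\infty$ that your Lipschitz estimate for $z\mapsto\Gamma(z)$ requires, and this bound is in fact not available under the paper's hypotheses. For $z$ away from $\mu$, the sphere--ball intersection argument of Lemma \ref{lemsmallball} only gives $\mathbb{P}\left[\|X-z\|\leq t^{-1}\right]=O\left(t^{-(d-1)}\right)$, which leaves $\mathbb{E}\left[\|X-z\|^{-2}\right]$ logarithmically divergent when $d=3$; the extra radial factor you invoke requires a bounded density for $W$, which the paper assumes only when $d=2$. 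Assumption [A1] controls the inverse square moment at $z=\mu$ only, and one can construct admissible laws of $W$ (satisfying [A1] and [A2]) for which $\mathbb{E}\left[\|X-z\|^{-2}\right]=+\infty$ at some $z\neq\mu$ in dimension $3$. So the companion lemma you plan to prove would fail, or at least cannot be proved, in the dimension the paper actually cares about.

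The paper sidesteps this entirely by a dichotomy you are missing: the estimate only has to be genuinely second order \emph{near} $\theta$. By continuity and [A1] there are $\epsilon',C'>0$ with $\mathbb{E}\left[\|X-z\|^{-2}\right]\leq C'$ for $z\in\mathcal{B}(\mu,\epsilon')$, which yields $\left\|\Phi(y)-\Gamma_{\theta}(y-\theta)\right\|\leq C''\left\|y-\theta\right\|^{2}$ on $\mathcal{B}(\theta,\epsilon')\cap\mathcal{K}$; this is your ``$\|z-\mu\|$ small'' case and it is fine. For the remaining $y\in\mathcal{K}$ with $\left\|y-\theta\right\|\geq\epsilon'$, no second-order information is needed at all: writing $\Phi(y)=\int_{0}^{1}\Gamma_{\theta+t(y-\theta)}(y-\theta)\,dt$ and using only the \emph{first} inverse moment from Lemma \ref{lemsmallball} together with the compactness of $\mathcal{K}$, one gets a linear bound $\left\|\Phi(y)-\Gamma_{\theta}(y-\theta)\right\|\leq C'''\left\|y-\theta\right\|$, and since $\left\|y-\theta\right\|\geq\epsilon'$ this is at most $(C'''/\epsilon')\left\|y-\theta\right\|^{2}$. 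Taking the maximum of the two constants finishes the proof. With this observation your decomposition goes through unchanged; without it, your argument does not close.
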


\begin{proof}[Proof of Proposition \ref{majdelta}] Under Assumption [A1], by continuity, there are positive constants $C', \epsilon '$ such that for $z \in \mathcal{B}\left( \mu , \epsilon ' \right)$, 
\[
\mathbb{E}\left[ \frac{1}{\left\| X - y \right\|^{2}} \right] \leq C' .
\]
Moreover, note that for all $y \in \mathcal{K}$,
\[
\Phi (y) = \int_{0}^{1}\Gamma_{\theta + t(y - \theta)}(y - \theta)dt.
\]
Thus, with analogous calculus to the ones in the proof of Lemma 5.1 in \cite{CCG2015}, one can check that there is a positive constant $C''$ such that for all $y \in \mathcal{B}\left( \theta , \epsilon ' \right) \cap \mathcal{K}$,
\[
\left\| \Phi (y) - \Gamma_{\theta} \right\| \leq C'' \left\| y - \theta \right\|^{2}.
\]
Moreover, for all $y=(z,a) \in \mathcal{K}$ and $y'=(z',a') \in \mathbb{R}^{d} \times \mathbb{R}$, 
\begin{align*}
 \Gamma_{y}(y')  & =  \begin{pmatrix}
z' - y \mathbb{E}\left[ \frac{1}{\left\| X - z \right\|}\left( z - \frac{\left\langle X- z , z' \right\rangle \left( X - z \right)}{\left\| X - z \right\|^{2}} \right) \right] + a' \mathbb{E}\left[ \frac{X - z}{\left\| X - z \right\|}\right] \\
 \mathbb{E}\left[ \frac{\left\langle X - z , z' \right\rangle}{\left\| X - z \right\|}\right] + a' 
\end{pmatrix} .
\end{align*}
Thus, applying Cauchy-Schwarz's inequality,
\begin{align*}
\left\| \Gamma_{y}(y') \right\|^{2} & = \left\| z' - a \mathbb{E}\left[ \frac{1}{\left\| X - z \right\|}\left( z' - \frac{\left\langle X- z , z' \right\rangle \left( X - z \right)}{\left\| X - z \right\|^{2}} \right) \right] + a' \mathbb{E}\left[ \frac{X - z}{\left\| X - z \right\|}\right] \right\|^{2} \\
& + \left\| \mathbb{E}\left[ \frac{\left\langle X - z , z' \right\rangle}{\left\| X - z \right\|}\right] + a' \right\|^{2} \\
 & \leq 3 \left\| z' \right\|^{2} + 3\left\| a \right\|^{2}\left\| z' \right\|^{2} \mathbb{E}\left[ \frac{1}{\left\| X - z \right\|}\right]^{2} + 3 \left\| a' \right\|^{2} + 2\left\| z' \right\|^{2} + 2\left\| a' \right\|^{2}
\end{align*}
Thus, applying Lemma \ref{lemsmallball}, there are positive constants $A_{1},A_{2}$ such that
\[
\left\| \Gamma_{y}(y') \right\| \leq A_{1} \left\| y' \right\| + A_{2} \left\| y \right\| \left\| y' \right\|
\]
Note that since $\mathcal{K}$ is compact and convex, there is a positive constant $C_{\mathcal{K}}$ such that for all $y \in \mathcal{K}$ and $t \in [0,1]$, $\left\| \theta +t( y - \theta ) \right\| \leq C_{\mathcal{K}}$, and in a particular case,
\[
\left\| \Gamma_{\theta + ( y - \theta )} (y - \theta) \right\| \leq \left( A_{1} + A_{2}C_{\mathcal{K}} \right) \left\| y - \theta \right\| .
\]
Thus, for all $ y \in \mathcal{K}$ such that $\left\| y - \theta \right\| \geq \epsilon '$,
\begin{align*}
\left\| \Phi (y ) - \Gamma_{\theta}\left( y - \theta \right) \right\| & \leq \int_{0}^{1} \left\| \Gamma_{\theta + t( y - \theta ) } \left( y - \theta \right) \right\| dt \\
& \leq \left( A_{1} + A_ {2}C_{\mathcal{K}} \right) \left\| y - \theta \right\| \\
& \leq \frac{1}{\epsilon '} \left( A_{1} + A_{2}C_{\mathcal{K}} \right) \left\| y - \theta \right\|^{2}.
\end{align*}
Thus, we conclude the proof taking $C_{\theta} = \max \left\lbrace C'' , \frac{1}{\epsilon '}\left( A_{1} + A_{2}C_{\mathcal{K}} \right) \right\rbrace$.
\end{proof}

\section{Proof of Section \ref{section_properties}}\label{appendixB}

\begin{proof}[Proof of Theorem \ref{theops}] 
Let us recall that there is a positive constant $c$ such that for all $y \in \mathcal{K}$, $\left\langle \Phi (y) , y - \theta \right\rangle \geq c \left\| y - \theta \right\|^{2}$. The aim is to use previous inequality and the fact that the projection is $1$-lipschitz in order to get an upper bound of $\mathbb{E}\left[ \left\| \widehat{\theta}_{n+1} - \theta \right\|^{2}|\mathcal{F}_{n} \right]$ and apply Robbins-Siegmund theorem to get the almost sure convergence of the algorithm.

\bigskip

\textbf{Almost sure convergence of the algorithm:} Since $\pi$ is $1$-lipschitz,
\begin{align*}
\left\| \widehat{\theta}_{n+1} - \theta \right\|^{2} & = \left\| \pi \left( \widehat{\theta}_{n} - \gamma_{n} \nabla_{y}g \left( X_{n+1} , \widehat{\theta}_{n} \right) \right) - \pi \left( \theta \right) \right\|^{2} \\
& \leq \left\| \widehat{\theta}_{n} - \gamma_{n} \nabla_{y}g \left( X_{n+1} , \widehat{\theta}_{n} \right) - \theta \right\|^{2} \\
& = \left\| \widehat{\theta}_{n} - \theta \right\|^{2} - 2\gamma_{n} \left\langle \nabla_{y}g \left( X_{n+1} , \widehat{\theta}_{n} \right) , \widehat{\theta}_{n} - \theta \right\rangle + \gamma_{n}^{2}\left\| \nabla_{y} g \left( X_{n+1} , \widehat{\theta}_{n} \right) \right\|^{2}
\end{align*} 
Thus, since $\widehat{\theta}_{n}$ is $\mathcal{F}_{n}$-measurable,
\begin{align*}
\mathbb{E}\left[ \left\| \widehat{\theta}_{n+1} - \theta \right\|^{2}|\mathcal{F}_{n} \right] & \leq \left\| \widehat{\theta}_{n} - \theta \right\|^{2} - 2\gamma_{n} \left\langle \mathbb{E}\left[ \nabla_{y}g \left( X_{n+1} , \widehat{\theta}_{n} \right)\Big| \mathcal{F}_{n} \right] , \widehat{\theta}_{n} - \theta \right\rangle + \gamma_{n}^{2}\mathbb{E}\left[ \left\| \nabla_{y} g \left( X_{n+1} , \widehat{\theta}_{n} \right) \right\|^{2}\Big|\mathcal{F}_{n} \right]\\
& = \left\| \widehat{\theta}_{n} - \theta \right\|^{2}  - 2 \gamma_{n} \left\langle \Phi (\widehat{\theta}_{n} ) , \widehat{\theta}_{n} - \theta \right\rangle + \gamma_{n}^{2}\mathbb{E}\left[ \left\| \nabla_{y}g \left( X_{n+1},\widehat{\theta}_{n} \right) \right\|^{2} |\mathcal{F}_{n} \right] \\
& \leq  \left\| \widehat{\theta}_{n} - \theta \right\|^{2}  - 2 c\gamma_{n} \left\| \widehat{\theta}_{n} - \theta \right\|^{2} + \gamma_{n}^{2}\mathbb{E}\left[ \left\| \nabla_{y}g \left( X_{n+1},\widehat{\theta}_{n} \right) \right\|^{2} |\mathcal{F}_{n} \right]
\end{align*}
Moreover, let $\widehat{\theta}_{n} := \left( Z_{n} , A_{n} \right)$ with $Z_{n} \in \mathbb{R}^{d}$ and $A_{n} \in \mathbb{R}$, we have
\begin{align*}
\mathbb{E}\left[ \left\| \nabla_{y} g \left( X_{n+1} , \widehat{\theta}_{n} \right) \right\|^{2} |\mathcal{F}_{n} \right] & =  \mathbb{E}\left[ \left\| Z_{n}-X_{n+1}-A_{n}\frac{Z_{n}-X_{n+1}}{\| Z_{n}-X_{n+1}\|} \right\|^{2}\Big|\mathcal{F}_{n} \right] + \mathbb{E}\left[ \left| A_{n} -  \left\| Z_{n}-X_{n+1} \right\|\right|^{2}|\mathcal{F}_{n} \right] \\
& \leq 4 \mathbb{E}\left[ \left\| Z_{n}-X_{n+1} \right\|^{2}\big|\mathcal{F}_{n} \right] + 4 \left( A_{n} \right)^{2}   \\
& \leq 8 \| Z_{n}- \mu \|^{2} + 8(r^{\star})^{2} + 8\left(A_{n}-r^{\star}\right)^{2} + 8 \mathbb{E}\left[ \| \mu -X_{n+1} \|^{2}|\mathcal{F}_{n} \right] \\
& = 8\left\| \widehat{\theta}_{n}-\theta \right\|^{2}+ 8(r^{\star})^{2} + 8 r^{2}\mathbb{E}\left[ W^{2}\right] .
\end{align*}
Let $M := 8(r^{\star})^{2} + 8 r^{2}\mathbb{E}\left[ W^{2}\right]$, we have
\begin{equation}\label{majord2}
\mathbb{E}\left[ \left\| \widehat{\theta}_{n} - \theta \right\|^{2}|\mathcal{F}_{n} \right] \leq \left( 1+8\gamma_{n}^{2} \right)\left\| \widehat{\theta}_{n} - \theta \right\|^{2} -2c\gamma_{n} \left\| \widehat{\theta}_{n} - \theta \right\|^{2} + \gamma_{n}^{2}M.
\end{equation}
Applying Robbins-Siegmund's theorem (see \cite{DUF1997} for instance), $\left\| \widehat{\theta}_{n} - \theta \right\|^{2}$ converges almost surely to a finite random variable, and in a particular case,
\[
\sum_{k=1}^{\infty} \gamma_{k} \left\| \widehat{\theta}_{k} - \theta \right\|^{2} < +\infty .
\]
Thus, since $\sum_{k \geq 1}\gamma_{k} = + \infty$,
\begin{equation}
\lim_{n \rightarrow + \infty}\left\| \widehat{\theta}_{n} - \theta \right\|^{2} = 0 \quad a.s. 
\end{equation}

\medskip

\textbf{Number of times the projection is used}

Let $N_{n}:= \sum_{k=1}^{n} \mathbf{1}_{\left\lbrace \widehat{\theta}_{k} - \gamma_{k} \nabla_{y} \left( X_{k+1} , \widehat{\theta}_{k} \right) \notin \mathcal{K} \right\rbrace }$. This sequence is non-decreasing, and suppose by contradiction that $N_{n}$ goes to infinity. Thus, there is a subsequence $\left( n_{k} \right)$ such that $\left( N_{n_{k}} \right)$ is increasing, i.e for all $k \geq 1$, $\widehat{\theta}_{n_{k}} - \gamma_{n} \nabla_{y} g \left( X_{n_{k}+1} , \widehat{\theta}_{n_{k}} \right) \notin \mathcal{K}$, and in a particular case, $\widehat{\theta}_{n_{k}+1} ~ \in ~ \partial \mathcal{K}$, where $\partial \mathcal{K}$ is the frontier of $\mathcal{K}$. Let us recall that $\theta$ is in the interior of $\mathcal{K}$, i.e let $d_{\min}:=\inf_{y \in \partial \mathcal{K}}\left\| \theta - y \right\| $, we have $d_{\min}>0$. Thus,
\[
\left\| \widehat{\theta}_{n_{k}+1} - \theta \right\| \geq d_{\min} \quad a.s,
\]
and,
\[
\lim_{k \rightarrow \infty}\left\| \widehat{\theta}_{n_{k}+1} - \theta \right\| = 0 \geq d_{min}> 0 \quad a.s,
\]
which leads to a contradiction.
\end{proof}

\begin{proof}[Proof of Theorem \ref{theol2l4}]
\textbf{Convergence in quadratic mean}

The aim is to obtain an induction relation for the quadratic mean error. Let us recall inequality (\ref{majord2}),
\[
\mathbb{E}\left[ \left\| \widehat{\theta}_{n+1} - \theta \right\|^{2}|\mathcal{F}_{n} \right] \leq \left( 1+8\gamma_{n}^{2} \right)\left\| \widehat{\theta}_{n} - \theta \right\|^{2} -2c\gamma_{n} \left\| \widehat{\theta}_{n} - \theta \right\|^{2} + \gamma_{n}^{2}M.
\]
Then we have
\[
\mathbb{E}\left[ \left\| \widehat{\theta}_{n+1} - \theta \right\|^{2} \right] \leq \left( 1-c\gamma_{n} + 8\gamma_{n}^{2} \right) \mathbb{E}\left[ \left\| \widehat{\theta}_{n} - \theta \right\|^{2}\right] + M \gamma_{n}^{2},
\]
and one can conclude the proof with the help of an induction (see \cite{godichon2015} for instance) or applying a lemma of stabilization (see \cite{duflo1996algorithmes}).

\bigskip

\textbf{$L^{p}$ rates of convergence}\\
Let $p \geq 2$, we now prove with the help of a strong induction that for all integer $p' \leq p$, there is a positive constant $C_{p'}$ such that for all $n \geq 1$,
\[
\mathbb{E}\left[ \left\| \widehat{\theta}_{n} - \theta \right\|^{2p'} \right] \leq \frac{C_{p'}}{n^{p'\alpha}}.
\]
This inequality is already checked for $p'=1$. Let $p' \geq 2$, we suppose from now that for all integer $k < p'$, there is a positive constant $C_{k}$ such that for all $n \geq 1$,
\[
\mathbb{E}\left[ \left\| \widehat{\theta}_{n} - \theta \right\|^{2k} \right] \leq \frac{C_{k}}{n^{k\alpha}} .
\]
We now search to give an induction relation for the $L^{2p'}$-error. Let us recall that
\[
\left\| \widehat{\theta}_{n+1} - \theta \right\|^{2} \leq \left\| \widehat{\theta}_{n} - \theta \right\|^{2} - 2 \gamma_{n} \left\langle \nabla_{y}g\left( X_{n+1},\widehat{\theta}_{n} \right) , \widehat{\theta}_{n} - \theta \right\rangle + \gamma_{n}^{2}\left\|  \nabla_{y}g\left( X_{n+1} , \widehat{\theta}_{n} \right)\right\|^{2}.
\]
We suppose from now that $\mathbb{E}\left[ W^{2p} \right] < + \infty $ (and in a particular case, $\mathbb{E}\left[ W^{k} \right] < + \infty$  for all integer $k \leq 2p$) and let $U_{n+1} := \nabla_{y}g\left( X_{n+1} , \widehat{\theta}_{n} \right)$. We have
\begin{align}
\notag \left\| \widehat{\theta}_{n+1} - \theta \right\|^{2p'} & \leq \left(\left\| \widehat{\theta}_{n} - \theta \right\|^{2} + \gamma_{n}^{2} \left\| U_{n+1} \right\|^{2} \right)^{p'} - 2 p'\gamma_{n}\left\langle \widehat{\theta}_{n} - \theta , U_{n+1} \right\rangle \left( \left\| \widehat{\theta}_{n} - \theta \right\|^{2} + \gamma_{n}^{2} \left\| U_{n+1 } \right\|^{2} \right)^{p'-1} \\
\label{momordp}& + \sum_{k=2}^{p'}\binom{p'}{k}\gamma_{n}^{k}\left| \left\langle \widehat{\theta}_{n} - \theta , U_{n+1} \right\rangle \right|^{k} \left( \left\| \widehat{\theta}_{n} - \theta \right\|^{2} + \gamma_{n}^{2} \left\| U_{n+1} \right\|^{2} \right)^{p'-k}.
\end{align}
The aim is to bound each term on the right-hand side of previous inequality. In this purpose, we first need to introduce some technical inequalities.
\begin{align*}
\left\| U_{n+1} \right\|^{2} & \leq 2 \left\| \nabla_{y}g\left( X_{n+1} , \widehat{\theta}_{n} \right) \right\|^{2} + 2 \mathbb{E}\left[ \left\| \nabla_{y}g\left( X_{n+1} , \widehat{\theta}_{n} \right) \right\|^{2} \Big| \mathcal{F}_{n} \right]     \\
& \leq 16 \left( 2\left\| \widehat{\theta}_{n} - \theta \right\|^{2} + 2\left( r^{\star} \right)^{2} + \left\| \mu - X_{n+1} \right\|^{2} + r^{2}\mathbb{E}\left[ W^{2} \right] \right)  . 
\end{align*}
Thus, applying Lemma A.1 in \cite{godichon2015} for instance, for all integer $k \leq p'$,
\[
 \left\| U_{n+1} \right\|^{2k} \leq 4^{k-1}16^{k} \left( 2^{k}\left\| \widehat{\theta}_{n} - \theta \right\|^{2k} + 2^{k}\left( r^{\star} \right)^{2k} + \left\| X_{n+1} - \mu \right\|^{2k} + r^{2k}\left(\mathbb{E}\left[ W^{2} \right]\right)^{k} \right) .
\]
In a particular case, since for all $k \leq p$, $\mathbb{E}\left[ W ^{2k} \right] < +\infty$, there are positive constants $A_{1,k},A_{2,k}$ such that for all $n \geq 1$,
\begin{align}
\notag \mathbb{E}\left[ \left\| U_{n+1} \right\|^{2k} \Big| \mathcal{F}_{n}\right] & \leq 4^{3k-1}\left( 2^{k}\left\| \widehat{\theta}_{n}-\theta \right\|^{2k} + 2^{k}\left( r^{\star} \right)^{2k} + r^{2k} \mathbb{E}\left[ W^{2k} \right] + r^{2k}\left( \mathbb{E}\left[ W^{2} \right] \right)^{k} \right) \\
\label{majun} & \leq A_{1,k}\left\| \widehat{\theta}_{n} - \theta \right\|^{2k} + A_{2,k}. 
\end{align}
We can now bound the expectation of the three terms on the right-hand side of inequality ~ (\ref{momordp}). First, since $\widehat{\theta}_{n}$ is $\mathcal{F}_{n}$- measurable, applying inequality (\ref{majun}), let
\begin{align*}
( * ) & := \mathbb{E}\left[ \left(\left\| \widehat{\theta}_{n} - \theta \right\| + \gamma_{n}^{2} \left\| U_{n+1} \right\|^{2} \right)^{p'} \right] \\
& = \mathbb{E}\left[ \left\| \widehat{\theta}_{n} - \theta \right\|^{2p'}\right] + \sum_{k=1}^{p'}\binom{p'}{k}\gamma_{n}^{2k}\mathbb{E}\left[ \left\| U_{n+1} \right\|^{2k} \left\| \widehat{\theta}_{n} - \theta \right\|^{2p'-2k}\right] \\
& \leq \mathbb{E}\left[ \left\| \widehat{\theta}_{n} - \theta \right\|^{2p'}\right] + \sum_{k=1}^{p'}\binom{p'}{k}\gamma_{n}^{2k}\mathbb{E}\left[\left( A_{1,k}\left\| \widehat{\theta}_{n} - \theta \right\|^{2k} + A_{2,k} \right) \left\| \widehat{\theta}_{n} - \theta \right\|^{2p'-2k}\right]
\end{align*}
Let $B:= \sum_{k=1}^{p'}c_{\gamma}^{2k-2}A_{1,k}$, using previous inequality and by induction, 
\begin{align}
\notag (*) &  \leq \left( 1+B\gamma_{n}^{2} \right) \mathbb{E}\left[ \left\| \widehat{\theta}_{n} - \theta \right\|^{2p'} \right] + \sum_{k=1}^{p'} \binom{p'}{k} \gamma_{n}^{2k}A_{2,k}\mathbb{E}\left[ \left\| \widehat{\theta}_{n} - \theta \right\|^{2p'-2k} \right] \\
\notag & \leq \left( 1+B\gamma_{n}^{2} \right) \mathbb{E}\left[ \left\| \widehat{\theta}_{n} - \theta \right\|^{2p'} \right] + \sum_{k=1}^{p'} \binom{p'}{k} c_{\gamma}^{2k}A_{2,k}\frac{C_{k}}{n^{(p'+k)\alpha}} \\
\label{maj*} & \leq \left( 1+B\gamma_{n}^{2} \right) \mathbb{E}\left[ \left\| \widehat{\theta}_{n} - \theta \right\|^{2p'} \right] + O \left( \gamma_{n}^{p'+1} \right) .
\end{align}
In the same way, applying Cauchy-Schwarz's inequality, let
\begin{align*}
(**) & := -2p'\gamma_{n}\mathbb{E}\left[  \left\langle \widehat{\theta}_{n} - \theta , U_{n+1} \right\rangle \left( \left\| \widehat{\theta}_{n} - \theta \right\|^{2} + \gamma_{n}^{2} \left\| U_{n+1} \right\|^{2} \right)^{p'-1}\right] \\
& \leq  -2p' \gamma_{n} \mathbb{E}\left[ \left\langle \widehat{\theta}_{n}  - \theta , U_{n+1} \right\rangle \left\| \widehat{\theta}_{n} - \theta \right\|^{2p'-2}\right] \\
& + 2p' \gamma_{n} \mathbb{E}\left[\left\| \widehat{\theta}_{n} - \theta \right\| \left\| U_{n+1} \right\| \sum_{k=1}^{p'-1}\binom{p'-1}{k}\gamma_{n}^{2k} \left\| U_{n+1} \right\|^{2k} \left\| \widehat{\theta}_{n} - \theta \right\|^{2p'-2k} \right] .
\end{align*}
Moreover, since $\widehat{\theta}_{n}$ is $\mathcal{F}_{n}$-measurable, applying Proposition \ref{propfortconv},
\begin{align*}
-2p' \gamma_{n} \mathbb{E}\left[ \left\langle \widehat{\theta}_{n}  - \theta , U_{n+1} \right\rangle \left\| \widehat{\theta}_{n} - \theta \right\|^{2p'-2}\right] & = -2p'\gamma_{n} \mathbb{E}\left[ \left\langle \widehat{\theta}_{n} - \theta , \mathbb{E}\left[ U_{n+1} |\mathcal{F}_{n} \right] \right\rangle \left\| \widehat{\theta}_{n} - \theta \right\|^{2p'-2} \right]  \\
& = -2p'\gamma_{n}\mathbb{E}\left[ \left\langle \widehat{\theta}_{n} - \theta , \Phi (\widehat{\theta}_{n}) \right\rangle \left\| \widehat{\theta}_{n} - \theta \right\|^{2p'-2} \right] \\
& \leq -2p'c \gamma_{n} \mathbb{E}\left[ \left\| \widehat{\theta}_{n} - \theta \right\|^{2p'} \right] .
\end{align*}
Moreover, since $2ab \leq a^{2} + b^{2}$, let
\begin{align*}
(**') & := 2p' \gamma_{n} \mathbb{E}\left[\left\| \widehat{\theta}_{n} - \theta \right\| \left\| U_{n+1} \right\| \sum_{k=1}^{p'-1}\binom{p'-1}{k}\gamma_{n}^{2k} \left\| U_{n+1} \right\|^{2k} \left\| \widehat{\theta}_{n} - \theta \right\|^{2p'-2k} \right] \\
& \leq p'\gamma_{n} \mathbb{E}\left[ \left( \left\| \widehat{\theta}_{n} - \theta \right\|^{2} + \left\| U_{n+1} \right\|^{2} \right)\sum_{k=1}^{p'-1}\binom{p'-1}{k}\gamma_{n}^{2k} \left\| U_{n+1} \right\|^{2k} \left\| \widehat{\theta}_{n} - \theta \right\|^{2p'-2k} \right] \\
& \leq p'\gamma_{n} \sum_{k=1}^{p'-1}\binom{p'-1}{k}\gamma_{n}^{2k}\left( \mathbb{E}\left[ \left\| U_{n+1} \right\|^{2k+2}\left\| \widehat{\theta}_{n}  - \theta \right\|^{2p'-2k} \right] + \mathbb{E}\left[ \left\| U_{n+1} \right\|^{2k}\left\| \widehat{\theta}_{n} - \theta \right\|^{2p'+2-2k} \right] \right) .
\end{align*}
With analogous calculus to the ones for inequality (\ref{maj*}), one can check that there is a positive constant $B'$ such that for all $n \geq 1$, 
\[ (**')\leq B'\gamma_{n}^{2}\mathbb{E}\left[ \left\| \widehat{\theta}_{n} - \theta \right\|^{2p'} \right] +  O \left( \gamma_{n}^{(p'+1)\alpha} \right).
\]
Thus, 
\begin{align}
\label{maj**} -2\gamma_{n}\mathbb{E}\left[ \gamma_{n} \left\langle \widehat{\theta}_{n} - \theta , U_{n+1} \right\rangle \left( \left\| \widehat{\theta}_{n} - \theta \right\|^{2} + \gamma_{n}^{2} \left\| U_{n+1} \right\|^{2} \right)^{p'-1}\right] & \leq \left( -2cp'\gamma_{n} + B'\gamma_{n}^{2} \right) \mathbb{E}\left[ \left\| \widehat{\theta}_{n} - \theta \right\|^{2p'} \right] \\
\notag &   + O \left( \gamma_{n}^{(p'+1)\alpha} \right) .
\end{align} 
Finally, applying Lemma A.1 in \cite{godichon2015} and since $\left| \left\langle a ,b \right\rangle\right| \leq \frac{1}{2}\left\| a \right\|^{2}+\frac{1}{2}\left\| b \right\|^{2}$, let
\begin{align*}
(***) & := \sum_{k=2}^{p'}\binom{p'}{k}\gamma_{n}^{k}\mathbb{E}\left[ \left| \left\langle \widehat{\theta}_{n} - \theta , U_{n+1} \right\rangle \right|^{k} \left( \left\| \widehat{\theta}_{n} - \theta \right\|^{2} + \gamma_{n}^{2} \left\| U_{n+1} \right\|^{2} \right)^{p'-k} \right] \\
& \leq \sum_{k=2}^{p'}\binom{p'}{k}\gamma_{n}^{k}\mathbb{E}\left[ \left( \frac{1}{2}\left\| \widehat{\theta}_{n} - \theta \right\|^{2} + \frac{1}{2} \left\| U_{n+1} \right\|^{2} \right)^{k} \left( \left\| \widehat{\theta}_{n} - \theta \right\|^{2} + \gamma_{n}^{2} \left\| U_{n+1} \right\|^{2} \right)^{p'-k} \right]  \\
& \leq \sum_{k=2}^{p'}\binom{p'}{k}2^{p'-k-2}\gamma_{n}^{k}\mathbb{E}\left[ \left( \left\| \widehat{\theta}_{n} - \theta \right\|^{2k} +  \left\| U_{n+1} \right\|^{2k}  \right)\left( \left\| \widehat{\theta}_{n} - \theta \right\|^{2p'-2k} + \gamma_{n}^{2p'-2k} \left\| U_{n+1} \right\|^{2p'-2k} \right) \right] 
\end{align*}
Thus, with analogous calculus to the ones for inequality (\ref{maj*}), one can check that there is a positive constant $B''$ such that for all $n \geq 1$,
\begin{equation}
\label{maj***} \sum_{k=2}^{p'}\binom{p'}{k}\gamma_{n}^{k}\mathbb{E}\left[ \left| \left\langle \widehat{\theta}_{n} - \theta , U_{n+1} \right\rangle \right|^{k} \left( \left\| \widehat{\theta}_{n} - \theta \right\|^{2} + \gamma_{n}^{2} \left\| U_{n+1} \right\|^{2} \right)^{p'-k} \right]  \leq B'' \gamma_{n}^{2}\mathbb{E}\left[ \left\| \widehat{\theta}_{n} - \theta \right\|^{2p'}\right] + O \left( \gamma_{n}^{p'+1} \right) .
\end{equation}
Finally, applying inequalities (\ref{maj*}) to (\ref{maj***}), there are positive constants $B_{1},B_{2}$ such that for all $n \geq 1$,
\begin{equation}
\mathbb{E}\left[ \left\| \widehat{\theta}_{n+1} - \theta \right\|^{2p'} \right] \leq \left( 1-2p'c\gamma_{n} + B_{1}\gamma_{n}^{2} \right) \mathbb{E}\left[ \left\| \widehat{\theta}_{n} - \theta \right\|^{2p'} \right] + B_{2}\gamma_{n}^{p'+1}.
\end{equation}
Thus, with the help of an induction on $n$ or applying a lemma of stabilization (see \cite{duflo1996algorithmes} for instance), one can check that there is a positive constant $C_{p'}$ such that for all $n\geq 1$,
\[
\mathbb{E}\left[ \left\| \widehat{\theta}_{n} - \theta \right\|^{2p'}\right] \leq \frac{C_{p'}}{n^{p'\alpha}},
\]
which concludes the induction on $p'$ and the proof.

\bigskip

\textbf{Bounding $\mathbb{P}\left[ \widehat{\theta}_{n} - \gamma_{n}\nabla_{y}g\left( X_{n+1} , \widehat{\theta}_{n} \right) \notin \mathcal{K}\right]$}\\
Let us recall that $d_{\min} = \inf_{y \in \partial \mathcal{K} }\left\| y - \theta \right\| > 0$ and that if $W$ admits a $2p$-th moment, there is a positive constant $C_{p}$ such that for all $n\geq 1$, $\mathbb{E}\left[ \left\| \widehat{\theta}_{n} - \theta \right\|^{2p}\right] \leq \frac{C_{p}}{n^{p\alpha}}$. Thus, for all $n \geq 1$,
\begin{align*}
\frac{C_{p}}{(n+1)^{p\alpha}} & \geq \mathbb{E}\left[ \left\| \widehat{\theta}_{n+1} - \theta \right\|^{2p} \right] \\
& \geq \mathbb{E}\left[ \left\| \widehat{\theta}_{n+1} - \theta \right\|^{2p} \mathbf{1}_{\left\lbrace  \widehat{\theta}_{n} - \gamma_{n}\nabla_{y}g\left( X_{n+1} , \widehat{\theta}_{n} \right) \notin \mathcal{K} \right\rbrace } \right] \\
& \geq d_{\min}^{2p}\mathbb{P}\left[ \widehat{\theta}_{n} - \gamma_{n}\nabla_{y}g\left( X_{n+1} , \widehat{\theta}_{n} \right) \notin \mathcal{K} \right] .
\end{align*}
Finally,
\[
\mathbb{P}\left[ \widehat{\theta}_{n} - \gamma_{n}\nabla_{y}g\left( X_{n+1} , \widehat{\theta}_{n} \right) \notin \mathcal{K}\right] \leq \frac{C_{p}}{d_{\min}^{2p}}\frac{1}{(n+1)^{p\alpha}} \leq \frac{C_{p}}{d_{\min}^{2p}}\frac{1}{n^{p\alpha}} .
\]

\end{proof}

\begin{proof}[Proof of Theorem \ref{theol2moy}] The aim is, in a first time, to exhibit a nice decomposition of the averaged algorithm. In this purpose, let us introduce this new decomposition of the PRM algorithm
\begin{equation}
\label{decxi} \widehat{\theta}_{n+1} - \theta = \widehat{\theta}_{n} - \theta - \gamma_{n} \Phi \left( \widehat{\theta}_{n} \right) + \gamma_{n} \xi_{n+1} + r_{n},
\end{equation}
with 
\begin{align*}
\xi_{n+1} & := -\nabla_{y}g\left(X_{n+1},\widehat{\theta}_{n} \right) + \Phi \left(\widehat{\theta}_{n} \right) ,  \\
r_{n} & := \pi \left( \widehat{\theta}_{n} - \gamma_{n} \nabla_{y}g \left( X_{n+1} , \widehat{\theta}_{n} \right)\right) - \widehat{\theta}_{n} + \gamma_{n} \nabla_{y}g \left( X_{n+1} , \widehat{\theta}_{n} \right)  .
\end{align*}
Remark that $\left( \xi_{n} \right)$ is a sequence of martingale differences adapted to the filtration $\left( \mathcal{F}_{n} \right)$ and $r_{n}$ is equal to $0$ when $\widehat{\theta}_{n} - \gamma_{n} \nabla_{y}g \left( X_{n+1} , \widehat{\theta}_{n} \right) \in \mathcal{K}$. Moreover, linearizing the gradient, decomposition (\ref{decxi}) can be written as
\begin{align}
\label{decdelta} \widehat{\theta}_{n+1} - \theta & = \left( I_{\mathbb{R}^{d} \times \mathbb{R}} - \gamma_{n}\Gamma_{\theta} \right) \left( \widehat{\theta}_{n} - \theta \right) + \gamma_{n} \xi_{n+1} - \gamma_{n} \delta_{n} + r_{n},
\end{align}
where $\delta_{n} := \Phi \left( \widehat{\theta}_{n} \right) - \Gamma_{\theta}\left( \widehat{\theta}_{n} - \theta \right)$ is the remainder term in the Taylor's expansion of the gradient. This can also be decomposed as
\[
\Gamma_{\theta} \left( \widehat{\theta}_{n} - \theta \right) = \frac{\widehat{\theta}_{n} - \theta}{\gamma_{n}} - \frac{\widehat{\theta}_{n+1} - \theta}{\gamma_{n}} - \delta_{n} + \frac{r_{n}}{\gamma_{n}} + \xi_{n+1}.
\]
As in \cite{pelletier2000asymptotic}, summing these equalities, applying Abel's transform and dividing by $n$,
\begin{align}
 \Gamma_{\theta} \left( \overline{\theta}_{n} - \theta \right) &  = \frac{1}{n}\left(\frac{\widehat{\theta}_{1} - \theta}{\gamma_{1}} - \frac{\widehat{\theta}_{n+1} - \theta}{\gamma_{n}} +  \sum_{k=2}^{n}\left( \frac{1}{\gamma_{k}} - \frac{1}{\gamma_{k-1}}\right)\left( \widehat{\theta}_{k} - \theta \right)  - \sum_{k=1}^{n} \delta_{k} + \sum_{k=1}^{n} \frac{r_{k}}{\gamma_{k}} \right)  + \frac{1}{n}\sum_{k=1}^{n}\xi_{k+1}.
\end{align}
We now give the rate of convergence in quadratic mean of each term using Theorem~ \ref{theol2l4}. In this purpose, let us recall the following technical lemma.

\begin{lem}[\cite{godichon2015}]\label{lemsum}
Let $Y_{1},...,Y_{n}$ be random variables taking values in a normed vector space such that for all positive constant $q$ and for all $k \geq 1$, $\mathbb{E}\left[ \left\| Y_{k} \right\|^{q} \right] ~ < ~ \infty$. Thus, for all constants $a_{1},...,a_{n}$ and for all integer $p$,
\begin{equation}
\mathbb{E}\left[ \left\| \sum_{k=1}^{n} a_{k}Y_{k} \right\|^{p} \right] \leq \left( \sum_{k=1}^{n} \left| a_{k} \right| \left( \mathbb{E}\left[ \left\| Y_{k} \right\|^{p} \right] \right)^{\frac{1}{p}} \right)^{p}
\end{equation}
\end{lem}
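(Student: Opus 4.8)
The plan is to recognize that this statement is nothing more than the triangle inequality (Minkowski's inequality) for the $L^p$-norm applied to vector-valued random variables, followed by raising both sides to the power $p$. For a random variable $Z$ taking values in the normed vector space, I would write $\| Z \|_{L^p} := \left( \mathbb{E}\left[ \| Z \|^p \right] \right)^{1/p}$. The integrability assumption guarantees that $\| Y_k \|_{L^p}$ is finite for every $k$, so all the quantities below are well defined; I shall assume the integer $p$ satisfies $p \geq 1$, which is the only case used in the applications.

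The key step is to establish that $\| \cdot \|_{L^p}$ obeys the triangle inequality. For two random variables $U, V$ in the space, the pointwise bound $\| U + V \| \leq \| U \| + \| V \|$ together with the monotonicity of $z \mapsto \mathbb{E}\left[ z^p \right]$ gives $\| U + V \|_{L^p} \leq \left( \mathbb{E}\left[ \left( \| U \| + \| V \| \right)^p \right] \right)^{1/p}$. The right-hand side is now a purely scalar quantity, to which the classical Minkowski inequality for the real random variables $\| U \|$ and $\| V \|$ applies, yielding $\| U + V \|_{L^p} \leq \| U \|_{L^p} + \| V \|_{L^p}$. This reduction of the vector-valued case to the scalar one is exactly where $p \geq 1$ enters, and it is the only non-elementary ingredient.

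Iterating this triangle inequality over the $n$ summands and using the homogeneity $\| a_k Y_k \|_{L^p} = | a_k | \, \| Y_k \|_{L^p}$, I would obtain
\[
\left( \mathbb{E}\left[ \left\| \sum_{k=1}^n a_k Y_k \right\|^p \right] \right)^{1/p} \leq \sum_{k=1}^n | a_k | \left( \mathbb{E}\left[ \| Y_k \|^p \right] \right)^{1/p}.
\]
Raising both sides to the power $p$ produces the claimed bound. I do not expect a genuine obstacle here: the only points requiring care are the passage from the pointwise triangle inequality to the scalar Minkowski inequality for the norms $\| Y_k \|$, and the implicit restriction to $p \geq 1$ so that the $L^p$-functional is indeed subadditive; for $p = 1$ the statement is immediate from the ordinary triangle inequality.
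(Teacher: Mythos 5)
Your argument is correct and is the standard one: the inequality is exactly the triangle inequality for the $L^p$-norm of Banach-space-valued random variables (reduced to scalar Minkowski via the pointwise bound $\left\| U+V \right\| \leq \left\| U \right\| + \left\| V \right\|$), combined with homogeneity and then raised to the power $p$. The paper itself gives no proof of this lemma --- it is imported from the cited reference --- but your proof is precisely the intended elementary derivation, including the correct observation that $p \geq 1$ is needed for subadditivity of the $L^p$-functional.
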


\bigskip

\textbf{The remainder terms:} First, one can check that
\begin{equation}
\label{majthetgam1}\frac{1}{n^{2}}\mathbb{E}\left[ \left\| \frac{\widehat{\theta}_{1}-\theta}{\gamma_{1}} \right\|^{2} \right] = o \left( \frac{1}{n} \right) .
\end{equation}
In the same way, applying Theorem \ref{theol2l4},
\begin{align}
\notag \frac{1}{n^{2}}\mathbb{E}\left[ \left\| \frac{\widehat{\theta}_{n+1} - \theta}{\gamma_{n}} \right\|^{2} \right] & = \frac{1}{c_{\gamma}^{2}}\frac{1}{n^{2-2\alpha}}\mathbb{E}\left[ \left\| \widehat{\theta}_{n+1} - \theta \right\|^{2} \right] \\
\notag & \leq \frac{C_{1}}{c_{\gamma}^{2}}\frac{1}{n^{2-\alpha}} \\
\label{majthetgam}& = o \left( \frac{1}{n}\right) .
\end{align}
Moreover, since $ \gamma_{k}^{-1} - \gamma_{k-1}^{-1} \leq 2\alpha c_{\gamma}^{-1}k^{\alpha -1}$, applying Lemma \ref{lemsum},
\begin{align}
\notag \frac{1}{n^{2}}\mathbb{E}\left[ \left\| \sum_{k=2}^{n}\left( \frac{1}{\gamma_{k}} - \frac{1}{\gamma_{k-1} }\right) \left( \widehat{\theta}_{k} - \theta \right) \right\|^{2} \right] & \leq \frac{1}{n^{2}}\left( \sum_{k=2}^{n} \left( \frac{1}{\gamma_{k}} - \frac{1}{\gamma_{k-1}}\right) \sqrt{\mathbb{E}\left[ \left\| \widehat{\theta}_{k} - \theta \right\|^{2} \right]} \right)^{2} \\ 
\notag & \leq \frac{4\alpha^{2}c_{\gamma}^{-2}C_{1}}{n^{2}} \left( \sum_{k=2}^{n} \frac{1}{k^{1-\alpha /2}} \right)^{2} \\
\notag & = O \left( \frac{1}{n^{2 - \alpha}} \right) \\
\label{majsumgam} & = o \left( \frac{1}{n} \right) .
\end{align}
Thanks to Lemma \ref{majdelta}, there is a positive constant $C_{\theta}$ such that for all $n \geq 1$,
\[
\left\| \delta_{n} \right\| \leq C_{\theta} \left\| \widehat{\theta}_{n} - \theta \right\|^{2}.
\]
Thus, applying Lemma \ref{lemsum} and Theorem \ref{theol2l4}, there is a positive constant $C_{2}$ such that
\begin{align*}
\frac{1}{n^{2}}\mathbb{E}\left[ \left\| \sum_{k=1}^{n} \delta_{k} \right\|^{2} \right] & \leq \frac{1}{n^{2}} \left( \sum_{k=1}^{n} \sqrt{ \mathbb{E}\left[ \left\| \delta_{k} \right\|^{2}\right]} \right)^{2} \\
& \leq \frac{C_{\theta}^{2}}{n^{2}}\left( \sum_{k=1}^{n} \sqrt{\mathbb{E}\left[ \left\| \widehat{\theta}_{k} - \theta \right\|^{4} \right]} \right)^{2} \\
& \leq \frac{C_{\theta}^{2}C_{2}}{n^{2}}\left( \sum_{k=1}^{n} \frac{1}{k^{\alpha}} \right)^{2} \\
& = O \left( \frac{1}{n^{2\alpha}} \right) \\
& = o \left( \frac{1}{n} \right) . 
\end{align*}
Let $U_{n+1}:= \nabla_{y} g\left( X_{n+1} , \widehat{\theta}_{n} \right)$, note that if $\widehat{\theta}_{n} - \gamma_{n}U_{n+1} \in \mathcal{K}$, then $r_{n}=0$. Thus, applying Lemma \ref{lemsum} and Cauchy-Schwarz's inequality,
\begin{align*}
\frac{1}{n^{2}}\mathbb{E}\left[ \left\| \sum_{k=1}^{n} \frac{r_{k}}{\gamma_{k}} \right\|^{2} \right] & \leq \frac{1}{n^{2}} \left( \sum_{k=1}^{n} \frac{1}{\gamma_{k}} \sqrt{\mathbb{E}\left[ \left\| r_{k} \right\|^{2}\right]} \right)^{2} \\
& = \frac{1}{n^{2}} \left( \sum_{k=1}^{n}\frac{1}{\gamma_{k}} \sqrt{\mathbb{E}\left[ \left\| r_{k} \right\|^{2}\mathbf{1}_{\widehat{\theta}_{k} - \gamma_{k}U_{k+1} \notin \mathcal{K} }\right]} \right)^{2} \\
 & \leq \frac{1}{n^{2}}\left( \sum_{k=1}^{n}\frac{1}{\gamma_{k}}\left( \mathbb{E}\left[ \left\| r_{k} \right\|^{4}\right]\right)^{\frac{1}{4}}\left( \mathbb{P}\left[ \widehat{\theta}_{k}- \gamma_{k}U_{k+1} \notin \mathcal{K} \right] \right)^{\frac{1}{4}} \right)^{2}.
\end{align*}
Moreover, since $\pi$ is $1$-lipschitz,
\begin{align*}
\left\| r_{n} \right\|^{4} & = \left\| \pi \left( \widehat{\theta}_{n} - \gamma_{n}U_{n+1} \right) - \theta + \theta - \widehat{\theta}_{n} + \gamma_{n}U_{n+1} \right\|^{4} \\
& \leq \left( \left\| \pi \left( \widehat{\theta}_{n} - \gamma_{n}U_{n+1} \right) - \pi \left( \theta \right) \right\| + \left\| \widehat{\theta}_{n} - \gamma_{n}U_{n+1} - \theta \right\| \right)^{4} \\ 
& \leq \left( 2 \left\| \widehat{\theta}_{n} - \theta - \gamma_{n} U_{n+1} \right\| \right)^{4} \\
& \leq 2^{7} \left\| \widehat{\theta}_{n} - \theta \right\|^{4} + 2^{7}\gamma_{n}^{4}\left\| U_{n+1} \right\|^{2}. 
\end{align*}
Thus, applying inequality (\ref{majun}), there are positive constants $A_{1},A_{2}$ such that for all $n \geq 1$,
\[
\mathbb{E}\left[ \left\| r_{n} \right\|^{4}\Big|\mathcal{F}_{n} \right] \leq A_{1}\left\| \widehat{\theta}_{n} - \theta \right\|^{4} + A_{2}\gamma_{n}^{4}.
\]
In a particular case, applying Theorem \ref{theol2l4}, there is a positive constant $A_{3}$ such that for all $n \geq 1$,
\[
\mathbb{E}\left[ \left\| r_{n} \right\|^{4} \right] \leq \frac{A_{3}}{n^{2\alpha}}.
\]
Moreover, applying Theorem \ref{theol2l4}, there is a positive constant $C_{6}$ such that for all $n \geq 1$,
\[\mathbb{P}\left[ \widehat{\theta}_{n} - \gamma_{n}U_{n+1} \notin \mathcal{K} \right] \leq \frac{C_{6}}{d_{\min}^{12}n^{6\alpha}}.
\]
Then,
\begin{align}
\notag \frac{1}{n^{2}}\mathbb{E}\left[ \left\| \sum_{k=1}^{n} \frac{r_{k}}{\gamma_{k}} \right\|^{2} \right] & \leq \frac{\sqrt{C_{6}A_{3}}}{d_{\min}^{6}c_{\gamma}n^{2}}\left( \sum_{k=1}^{n}\frac{1}{k^{\alpha}} \right)^{2} \\
\notag & = O \left( \frac{1}{n^{2\alpha}} \right) \\
\label{majsumrn}& = o \left( \frac{1}{n} \right) .
\end{align}

\textbf{The martingale term:} Since $\left( \xi_{n} \right)$ is a sequence of martingale differences adapted to the filtration $\left( \mathcal{F}_{n} \right)$,
\begin{align*}
\frac{1}{n^{2}}\mathbb{E}\left[ \left\| \sum_{k=1}^{n}\xi_{k+1} \right\|^{2} \right] & = \frac{1}{n^{2}}\sum_{k=1}^{n} \mathbb{E}\left[ \left\| \xi_{k+1} \right\|^{2} \right] + \frac{2}{n^{2}}\sum_{k=1}^{n} \sum_{k'=k+1}^{n} \mathbb{E}\left[ \left\langle \xi_{k+1},\xi_{k'+1} \right\rangle \right] \\
& = \frac{1}{n^{2}}\sum_{k=1}^{n} \mathbb{E}\left[ \left\| \xi_{k+1} \right\|^{2} \right] + \frac{2}{n^{2}}\sum_{k=1}^{n} \sum_{k'=k+1}^{n} \mathbb{E}\left[ \left\langle \xi_{k+1},\mathbb{E}\left[ \xi_{k'+1}|\mathcal{F}_{k'} \right] \right\rangle \right] \\
& =  \frac{1}{n^{2}} \sum_{k=1}^{n}\mathbb{E}\left[ \left\| \xi_{k+1} \right\|^{2} \right] .
\end{align*}
Moreover,
\begin{align*}
\mathbb{E}\left[ \left\| \xi_{n+1} \right\|^{2} \right] & = \mathbb{E}\left[ \left\| U_{n+1} \right\|^{2} - 2 \mathbb{E}\left[  \left\langle \mathbb{E}\left[ U_{n+1}|\mathcal{F}_{n} \right] , \Phi (\widehat{\theta}_{n} \right\rangle \right] + \mathbb{E}\left[  \left\| \Phi (\widehat{\theta}_{n} \right) \right\|^{2} \right] \\
& = \mathbb{E}\left[ \left\| U_{n+1} \right\|^{2} \right] - \mathbb{E}\left[ \left\| \Phi (\widehat{\theta}_{n} ) \right\|^{2} \right] \\
& \leq \mathbb{E}\left[ \left\| U_{n+1}  \right\|^{2} \right] .
\end{align*}
Finally, applying inequality (\ref{majun}) and Theorem \ref{theol2l4}, there is a positive constant $M$ such that
\begin{align*}
\mathbb{E}\left[ \left\| \xi_{n+1} \right\|^{2} \right] & \leq A_{1,1}\mathbb{E}\left[ \left\| \widehat{\theta}_{n} - \theta \right\|^{2} \right] + A_{2,1} \\
& \leq M. 
\end{align*}
Then,
\begin{align*}
\frac{1}{n^{2}}\mathbb{E}\left[ \left\| \sum_{k=1}^{n} \xi_{k+1} \right\|^{2} \right] & \leq \frac{1}{n^{2}}\sum_{k=1}^{n} M \\
& = \frac{M}{n},
\end{align*}
which concludes the proof.
\end{proof}

\bigskip

\begin{proof}[Proof of Theorem \ref{theotlc}] Let us recall that the averaged algorithm can be written as follows
\begin{equation}
\sqrt{n} \Gamma_{\theta} \left( \overline{\theta}_{n} - \theta \right)   = \frac{1}{\sqrt{n}}\left(\frac{\widehat{\theta}_{1} - \theta}{\gamma_{1}} - \frac{\widehat{\theta}_{n+1} - \theta}{\gamma_{n}} +  \sum_{k=2}^{n}\left( \frac{1}{\gamma_{k}} - \frac{1}{\gamma_{k-1}}\right)\left( \widehat{\theta}_{k} - \theta \right)  - \sum_{k=1}^{n} \delta_{k} + \sum_{k=1}^{n} \frac{r_{k}}{\gamma_{k}} \right)  + \frac{1}{\sqrt{n}}\sum_{k=1}^{n}\xi_{k+1}.
\end{equation}
We now prove that the first terms on the right-hand side of previous equality converge in probability to $0$ and apply a Central Limit Theorem to the last one. 

\bigskip

\textbf{The remainder terms:} Applying inequalities (\ref{majthetgam1}) to (\ref{majsumrn}),
\begin{align*}
\frac{1}{\sqrt{n}}\frac{\widehat{\theta}_{1}-\theta}{\gamma_{1}} &  \xrightarrow[n\to \infty]{\mathbb{P}} 0 , \\
\frac{1}{\sqrt{n}}\frac{\widehat{\theta}_{n+1}-\theta}{\gamma_{n}} &  \xrightarrow[n\to \infty]{\mathbb{P}} 0 , \\
\frac{1}{\sqrt{n}}\sum_{k=2}^{n} \left( \frac{1}{\gamma_{k}} - \frac{1}{\gamma_{k-1}} \right) \left( \widehat{\theta}_{k} - \theta \right) & \xrightarrow[n\to \infty]{\mathbb{P}} 0 , \\
\frac{1}{\sqrt{n}}\sum_{k=1}^{n} \frac{r_{k}}{\gamma_{k}} & \xrightarrow[n\to \infty]{\mathbb{P}} 0 .
\end{align*} 

\bigskip

\textbf{The martingale term} Let $\widehat{\theta}_{n} = \left( Z_{n} , A_{n}\right)\in \mathbb{R}^{d} \times \mathbb{R}$, then $\xi_{n+1}$ can be written as $\xi_{n+1} = \xi_{n+1}' + \epsilon_{n+1} + \epsilon_{n+1}'$, with
\begin{align*}
 \xi_{n+1}' & := \begin{pmatrix}
\mu - X_{n+1} - \mathbb{E}\left[ \mu - X_{n+1} |\mathcal{F}_{n} \right] -r^{\star}\left( \frac{\mu - X_{n+1}}{\left\| \mu - X_{n+1} \right\|} - \mathbb{E}\left[ \frac{\mu - X_{n+1}}{\left\| \mu - X_{n+1} \right\|} \big| \mathcal{F}_{n} \right] \right)   \\
r^{\star} - \left\| \mu - X_{n+1} \right\| - r^{\star} + \mathbb{E}\left[ \left\| \mu - X_{n+1} \right\| \big| \mathcal{F}_{n} \right]
\end{pmatrix} , \\
\\
 \epsilon_{n+1}& : = -\begin{pmatrix}
\left( A_{n}-r^{\star}\right)\left( \frac{Z_{n} - X_{n+1}}{\left\| Z_{n} - X_{n+1} \right\|} - \mathbb{E}\left[ \frac{Z_{n} - X_{n+1}}{\left\| Z_{n} - X_{n+1} \right\|} \big|\mathcal{F}_{n} \right] \right)     \\
\left\| Z_{n} - X_{n+1} \right\| - \mathbb{E}\left[ \left\| Z_{n} - X_{n+1} \right\| \mathcal{F}_{n} \right] - \left\| \mu - X_{n+1} \right\| + \mathbb{E}\left[ \left\| \mu - X_{n+1} \right\| \big| \mathcal{F}_{n} \right] 
\end{pmatrix} , \\
\\
\epsilon_{n+1}' & := -\begin{pmatrix}
r^{\star}\left( \frac{Z_{n} - X_{n+1}}{\left\| X_{n+1} - Z_{n} \right\|} - \frac{\mu - X_{n+1}}{\left\| \mu - X_{n+1} \right\|} - \mathbb{E}\left[ \frac{Z_{n} - X_{n+1}}{\left\| X_{n+1} - Z_{n} \right\|} - \frac{\mu - X_{n+1}}{\left\| \mu - X_{n+1} \right\|} \Big| \mathcal{F}_{n} \right] \right) \\
0 \end{pmatrix} .
\end{align*}
Note that $\left( \xi_{n} \right) , \left( \epsilon_ {n} \right) , \left( \epsilon_{n}' \right)$ are martingale differences sequences adapted to the filtration ~$\left( \mathcal{F}_{n} \right) $. Thus,
\begin{align*}
\frac{1}{n}\mathbb{E}\left[ \left\| \sum_{k=1}^{n} \epsilon_{k+1} \right\|^{2} \right] & = \frac{1}{n}\sum_{k=1}^{n} \mathbb{E}\left[ \left\| \epsilon_{k+1} \right\|^{2} \right] + \frac{2}{n}\sum_{k=1}^{n} \sum_{k'=k+1}^{n} \mathbb{E}\left[ \left\langle \epsilon_{k+1} , \epsilon_{k'+1} \right\rangle \right] \\
& = \frac{1}{n}\sum_{k=1}^{n} \mathbb{E}\left[ \left\| \epsilon_{k+1} \right\|^{2} \right] + \frac{2}{n}\sum_{k=1}^{n} \sum_{k'=k+1}^{n} \mathbb{E}\left[ \left\langle \epsilon_{k+1} , \mathbb{E}\left[ \epsilon_{k'+1}|\mathcal{F}_{k'} \right] \right\rangle \right] \\
& = \frac{1}{n}\sum_{k=1}^{n} \mathbb{E}\left[ \left\| \epsilon_{k+1} \right\|^{2} \right] .
\end{align*}
Moreover,
\begin{align*}
\mathbb{E}\left[ \left\| \epsilon_{n+1} \right\|^{2} |\mathcal{F}_{n} \right] & = \mathbb{E}\left[ \left\| \begin{pmatrix}
\left( A_{n} - r^{\star} \right)  \frac{Z_{n} - X_{n+1}}{\left\| Z_{n} - X_{n+1}\right\|} \\
\left\| Z_{n} - X_{n+1} \right\| - \left\| \mu -X_{n+1} \right\|
\end{pmatrix} \right\|^{2} \Big| \mathcal{F}_{n} \right] \\
\\
&  + \left\| \begin{pmatrix}
\left( A_{n} - r^{\star} \right)  \mathbb{E}\left[ \frac{Z_{n} - X_{n+1}}{\left\| Z_{n} - X_{n+1}\right\|} \Big|\mathcal{F}_{n}  \right] \\
\mathbb{E}\left[ \left\| Z_{n} - X_{n+1} \right\| - \left\| \mu -X_{n+1} \right\| |\mathcal{F}_{n} \right]
\end{pmatrix} \right\|^{2} \\
\\
& -2 \mathbb{E}\left[ \left\langle \begin{pmatrix}
\left( A_{n} - r^{\star} \right)  \frac{Z_{n} - X_{n+1}}{\left\| Z_{n} - X_{n+1}\right\|} \\
\left\| Z_{n} - X_{n+1} \right\| - \left\| \mu -X_{n+1} \right\|
\end{pmatrix} , \begin{pmatrix}
\left( A_{n} - r^{\star} \right)  \mathbb{E}\left[ \frac{Z_{n} - X_{n+1}}{\left\| Z_{n} - X_{n+1}\right\|} \Big|\mathcal{F}_{n}  \right] \\
\mathbb{E}\left[ \left\| Z_{n} - X_{n+1} \right\| - \left\| \mu -X_{n+1} \right\| |\mathcal{F}_{n} \right]
\end{pmatrix} \right\rangle \Big| \mathcal{F}_{n} \right] .
\end{align*}
Thus, one can check that
\begin{align*}
\mathbb{E}\left[ \left\| \epsilon_{n+1} \right\|^{2}|\mathcal{F}_{n}  \right] & \leq \mathbb{E} \left[ \left\| \begin{pmatrix}
-\left( A_{n} - r^{\star} \right)  \frac{Z_{n} - X_{n+1}}{\left\| Z_{n} - X_{n+1}\right\|} \\
\left\| Z_{n} - X_{n+1} \right\| - \left\| \mu -X_{n+1} \right\|
\end{pmatrix} \right\|^{2} \Big| \mathcal{F}_{n} \right] \\
& \leq \left\| A_{n} - r^{\star} \right\|^{2} + \left\| Z_{n} - \mu \right\|^{2} \\
& = \left\| \widehat{\theta}_{n} - \theta \right\|^{2} .
\end{align*}
Thus, applying Theorem \ref{theol2l4},
\begin{align*}
\frac{1}{n}\mathbb{E}\left[ \left\| \sum_{k=1}^{n} \epsilon_{k+1} \right\|^{2} \right] & = \frac{1}{n}\sum_{k=1}^{n} \mathbb{E}\left[ \left\| \epsilon_{k+1} \right\|^{2} \right] \\
& \leq \frac{1}{n}\sum_{k=1}^{n}\mathbb{E}\left[ \left\| \widehat{\theta}_{n} - \theta \right\|^{2} \right] \\
& \leq \frac{1}{n}\sum_{k=1}^{n}\frac{C'}{n^{\alpha}} \\
& = O \left( \frac{1}{n^{\alpha}}\right) .
\end{align*}
As a particular case, 
\[
\frac{1}{\sqrt{n}}\sum_{k=1}^{n} \epsilon_{k+1} \xrightarrow[n \to \infty]{\mathbb{P}} 0.
\]
Similarly, since $\left\| \frac{Z_{n} - X_{n+1}}{\left\| Z_{n} - X_{n+1} \right\|}- \frac{\mu -X_{n+1}}{\left\| \mu - X_{n+1} \right\|}\right\| \leq 2$,
\begin{align*}
\mathbb{E}\left[ \left\| \epsilon_{n+1}' \right\|^{2}|\mathcal{F}_{n} \right] & \leq \mathbb{E}\left[ \left\| \begin{pmatrix}
r^{\star}\left( \frac{Z_{n} - X_{n+1}}{\left\| X_{n+1} - Z_{n} \right\|} - \frac{\mu - X_{n+1}}{\left\| \mu - X_{n+1} \right\|} \right) \\
0
\end{pmatrix} \right\|^{2} \Big| \mathcal{F}_{n} \right] \\
& \leq 2 \left( r^{\star} \right)^{2} \mathbb{E}\left[ \left\|  \frac{Z_{n} - X_{n+1}}{\left\| X_{n+1} - Z_{n} \right\|} - \frac{\mu - X_{n+1}}{\left\| \mu - X_{n+1} \right\|} \right\|  \Big| \mathcal{F}_{n} \right] .
\end{align*}
This last term is closely related to the gradient of the function we need to minimize to get the geometric median (see \cite{kemperman1987median} for example) and it is proved in \cite{CCG2015} that since Lemma \ref{lemsmallball} is verified, then 
\[
\mathbb{E}\left[ \left\|  \frac{Z_{n} - X_{n+1}}{\left\| X_{n+1} - Z_{n} \right\|} - \frac{\mu - X_{n+1}}{\left\| \mu - X_{n+1} \right\|} \right\|  \Big| \mathcal{F}_{n} \right] \leq C \left\| Z_{n} - \mu \right\| \leq C \left\| \widehat{\theta}_{n} - \theta \right\|.
\]
Thus,
\[
\mathbb{E}\left[ \left\| \epsilon_{n+1} \right\|^{2} |\mathcal{F}_{n} \right] \leq 2C\left( r^{\star} \right)^{2} \left\| \widehat{\theta}_{n} - \theta \right\| .
\]
Finally, since $\left( \epsilon_{n} \right)$ is a sequence of martingale differences adapted to the filtration $\left( \mathcal{F}_{n} \right)$, applying Theorem \ref{theol2l4} and Cauchy-Schwarz's inequality,
\begin{align*}
\frac{1}{n}\mathbb{E}\left[ \left\| \sum_{k=1}^{n} \epsilon_{k+1}' \right\|^{2} \right] & = \frac{1}{n}\sum_{k=1}^{n} \mathbb{E}\left[ \left\| \epsilon_{k+1}' \right\|^{2} \right] \\
& \leq 2C\left( r^{\star} \right)^{2} \frac{1}{n} \sum_{k=1}^{n}\mathbb{E} \left[ \left\| \widehat{\theta}_{n} - \theta \right\| \right] \\
& \leq 2C\left( r^{\star} \right)^{2}\sqrt{C_{1}} \frac{1}{n} \sum_{k=1}^{n} \frac{1}{k^{\alpha /2}} \\
& = O \left( \frac{1}{n^{\alpha /2}} \right) .
\end{align*}
Note that with more assumptions on $W$, we could get a better rate but this one is sufficient. Indeed, thanks to previous inequality,
\[
\frac{1}{\sqrt{n}}\sum_{k=1}^{n} \epsilon_{k+1}' \xrightarrow[n \to \infty]{\mathbb{P}} 0 .
\]
Finally, applying a Central Limit Theorem (see \cite{DUF1997} for example), we have the convergence in law
\begin{equation}
\frac{1}{\sqrt{n}}\sum_{k=1}^{n} \xi_{k+1}' \xrightarrow[n \to \infty]{\mathcal{L}} \mathcal{N}\left( 0 , \Sigma \right),
\end{equation}
with 
\[
\Sigma := \mathbb{E}\left[ \begin{pmatrix}
\mu -X - r^{\star}\frac{\mu -X}{\left\| \mu - X \right\|} \\
r^{\star} - \left\| \mu - X \right\|
\end{pmatrix}\otimes \begin{pmatrix}
\mu -X - r^{\star}\frac{\mu -X}{\left\| \mu - X \right\|} \\
r^{\star} - \left\| \mu - X \right\| 
\end{pmatrix} \right] ,
\]
which also can be written as
\[
\Sigma = \mathbb{E}\left[ \begin{pmatrix}
r^{\star}U_{\Omega} - rWU_{\Omega}   \\
r^{\star} - rW
\end{pmatrix}\otimes \begin{pmatrix}
r^{\star}U_{\Omega} - rWU_{\Omega} \\
r^{\star} - rW
\end{pmatrix} \right] .
\]
Thus, we have the convergence in law
\[
\sqrt{n}\Gamma_{\theta} \left( \overline{\theta}_{n} - \theta \right) \xrightarrow[n \to \infty]{\mathcal{L}} \mathcal{N}\left( 0 , \Sigma \right),
\]
and in a particular case,
\[
\sqrt{n} \left( \overline{\theta}_{n} - \theta \right) \xrightarrow[n \to \infty]{\mathcal{L}} \mathcal{N}\left( 0 , \Gamma_{\theta}^{-1} \Sigma \Gamma_{\theta}^{-1} \right) . 
\]
\end{proof}

\def\cprime{$'$}

\end{document}